\documentclass{article}
\setlength{\oddsidemargin}{0.3in} \setlength{\evensidemargin}{0.3in} \setlength{\textwidth}{6.2in}

\usepackage{amsmath, amsthm, amsfonts, amssymb, amscd, stmaryrd,xcolor,enumerate, enumitem}

\usepackage{graphicx}
\usepackage{tikz}

\theoremstyle{definition}

\theoremstyle{plain}
\newtheorem{Thm}{Theorem}
\newtheorem{Lem}{Lemma}
\newtheorem{Cor}{Corollary}

\newtheorem{Prop}{Proposition}

\numberwithin{equation}{section}	
\numberwithin{figure}{Prob}
\numberwithin{table}{Prob}
\numberwithin{Thm}{section}
\numberwithin{Lem}{section}
\numberwithin{Def}{section}
\numberwithin{Prop}{section}
\numberwithin{Cor}{section}

\newcommand{\Z}{\mathbb{Z}}

\newcommand{\C}{\mathbb{C}}

\newcommand{\Span}{\text{Span}}

\title{Almost Commutative Terwilliger Algebras of Group Association Schemes I: Classification}
\author{Nicholas L. Bastian, Stephen P. Humphries}

\date{\today}

\begin{document}

\maketitle

\begin{abstract}
Terwilliger algebras are a subalgebra of a matrix algebra that are constructed from association schemes over finite sets. In 2010, Rie Tanaka defined what it means for a Terwilliger algebra to be almost commutative. In that paper she gave five equivalent conditions for a Terwilliger algebra to be almost commutative. In this paper, we provide a classification of which groups result in an almost commutative Terwilliger algebra when looking at the group association scheme (the Schur ring generated by the conjugacy classes of the group). In particular, we show that all such groups are either abelian, or Camina groups. Following this classification, we then compute the dimension and non-primary primitive idempotents for each Terwilliger algebra of this form for the first three types of groups whose group association scheme gives an almost commutative Terwilliger algebra. The final case will be considered in a second paper.    \end{abstract}

\textbf{Keywords}:
Terwilliger algebra, Camina group, association scheme, group association scheme, Schur rings, centralizer algebra\\

\textbf{MSC 2020 Classification}: 05E30, 05E16

\section{Introduction}

Terwilliger algebras were originally developed in the 1990's by Paul Terwilliger to provide a method for studying commutative association schemes. In particular, he looked at P-Polynomial and Q-Polynomial association schemes. Over the course of the three papers \cite{Terwilliger1,Terwilliger2,Terwilliger3} in which Terwilliger algebras were originally introduced, Terwilliger manages to find a combinatorial characterization of thin P-Polynomial and Q-Polynomial association schemes. 

Since Terwilliger algebras were defined by Terwilliger, others have studied them with relation to various commutative association schemes. Terwilliger algebras have been studied as they relate to Johnson schemes in \cite{TerJohnsonScheme1, TerJohnsonSchemeLeon, TerJohnsonScheme2, TerJohnsonSchemeGroupRep}. Extending this study of Johnson schemes, the Johnson geometry was studied using Terwilliger algebras in \cite{TerJohnsonGraph1, TerJohnsonGraph2}. The Terwilliger algebra for Hamming schemes was studied in \cite{TerHamming} and the incidence matrix of the Hamming graph was studied in \cite{HammingGraph}. Another common place where Terwilliger algebras are used is in looking at distance regular graphs, which also give rise to a P-polynomial association scheme: see \cite{DRG4, DRG3, DRG2, DRG5, DRG1}. In addition to distance regular graphs, Terwilliger algebras have also been applied to bipartite distance-regular graphs in \cite{BDRG1, BDRG2, BDRG3}, almost-bipartite distance-regular graphs in \cite{ABDRG1}, and distance-biregular graphs in \cite{DBRG1}. Terwilliger algebras have also been studied for group association schemes in \cite{GroupAssoc1, Bannaiarticle, Bastian, GroupAssoc2}. In addition to these commonly studied areas, Terwilliger algebras have also been studied with relation to wreath products of association schemes in \cite{WreathProduct1, WreathProduct2}, the hypercube in \cite{Hypercube}, latin squares in \cite{LatinSquare}, odd graphs in \cite{Oddgraph}, a strongly regular graph in \cite{SRG}, quantum adjacency algebras in \cite{Quantum}, almost-bipartite P- and Q-polynomial association schemes in \cite{ABPQ}, Lee association schemes over $\Z_4$ in \cite{Leescheme}, cyclotomic association schemes in \cite{CyclAssoc}, semidefinite programming in \cite{Semiprog}, and generalized quadrangles in \cite{Quadrangles}. Generalizations of Terwilliger algebras can be found in \cite{GenTer,Nicholson, VarietalTer}.

This paper is organized as follows. Section $2$ will discuss what a Terwilliger algebra is as well as what it means for a Terwilliger algebra to be almost commutative. In Section $3$ we shall discuss Camina groups as well as results relating to them that are relevant to the classification. In Section $4$ we will discuss the classification of Almost Commutative Terwilliger algebras for group association schemes. Then in Section $5$ we shall determine the non-primary primitive idempotents of Almost Commutative Terwilliger algebras for group association schemes in three of the four cases. The remaining case is considered in a subsequent paper \cite{Bastian2}. 

We note all computations considered in the preparation of this paper were preformed in Magma \cite{Magma}.

\section{Terwilliger Algebras}

In this section we shall discuss some results relating to Terwilliger algebras. For more information about them see \cite{Terwilliger1}. Let $\Omega$ be a finite set. Suppose $A_0,A_1,\cdots, A_d\in M_{|\Omega|}(\C)$ are $0,1$ matrices with rows and columns indexed by the elements of $\Omega$. Let $A^t$ denote the transpose of the matrix $A$. If additionally 
\begin{enumerate}
    \item $A_0=I_{|\Omega|}$;
    \item for all $i=1,2,\dots, d$, we have $A_i^t=A_j$ for some $j=1,2,\dots ,d$;
    \item for all $i,j\in \{1,2,\dots, d\}$ we have $A_iA_j=\sum_{k=1}^d p_{ij}^k A_k$;
    \item none of the $A_i$ is equal to $0_{|\Omega|}$ and $\sum_{i=0}^d A_i$ is the all $1$ matrix;
\end{enumerate}
then we say that $\mathcal{A}=(\Omega,\{A_i\}_{0\leq i\leq d})$ is an \emph{association scheme}. We call the matrices $A_0,A_1,\cdots, A_d$ the \emph{adjacency matrices} of the association scheme and the constants $p_{ij}^k$ are called the \emph{intersection numbers}. If we have $A_iA_j=A_jA_i$ for all $i,j$, then we say that we have a \emph{commutative association scheme}. 

The association schemes that we will be interested in come from a Schur ring, which is an example of an association scheme and which we now define. Let $G$ be a group and $R$ be a commutative ring. For any $C\subseteq G$, we let $\overline{C}=\sum_{g\in C} g\in R[G]$. We call this a \emph{simple quantity}. Given any $C\subseteq G$, we let $C^*=\{g^{-1}\colon g\in G\}$.

Now given a group $G$ and ring $R$, let $\mathcal{P}$ be a partition of $G$ of finite support, that is, if $C\in \mathcal{P}$ then $|C| < \infty$. We say that $\mathfrak{S}=\Span_R\{\overline{C}|C\in \mathcal{P}\}$ is a \emph{Schur ring} if:
\begin{enumerate}
    \item $\{e\} \in \mathcal{P};$
    \item if $C \in \mathcal{P}$, then $C^* \in \mathcal{P};$
    \item for all $C, D \in \mathcal{P}$,
$\overline{C}\cdot\overline{D} = \sum_{E\in \mathcal{P}} \lambda_{CDE}\overline{E}$,
where all but finitely many $\lambda_{CDE}=0$.
\end{enumerate}

Schur rings have been studied on their own extensively; see \cite{Me2019Infinite,Ma89,Wielandt49}. They give rise to association schemes in the following way. Given a Schur ring $\mathfrak{S}$ over the finite group $G$ with partition set $\mathcal{P}=\{P_0=\{e\},P_1,P_2,\cdots , P_d\}$, we define $|G|\times |G|$ matrices $A_i$, $0\leq i\leq d$, by
\[(A_i)_{xy}=\left\{\begin{array}{cc}
    1 & \text{ if }yx^{-1}\in P_i  \\
    0 & \text{ otherwise.} 
\end{array}\right.\]
With this construction $(G,\{A_i\}_{0\leq i\leq d})$ is an association scheme. If the Schur ring is commutative then the association scheme is commutative. The structure constants $\lambda_{ijk}$ coming from $\overline{P_i}\cdot \overline{P_j}=\sum_{P_j\in \mathcal{P}}\lambda_{ijk}\overline{P_k}$ are the intersection numbers of the association scheme. In particular, partitioning $G$ by the conjugacy classes of $G$, we get a Schur ring. This in turn gives rise to a commutative association scheme, called the \emph{group association scheme}. 

Returning to arbitrary association schemes, we notice that $\mathfrak{A}=\Span_\C\{A_0,A_1,\cdots, A_d\}$ is an algebra called the \emph{Bose-Mesner algebra} of the association scheme.

We let $E_0,E_1,\dots, E_d$ be the primitive idempotents of $\mathfrak{A}$. As $\mathfrak{A}$ is closed under component-wise multiplication (Hadamard product, denoted $\circ$), we have $E_i\circ E_j\in \mathfrak{A}$. Then there are complex numbers $q_{ij}^k$, called the \emph{Krein parameters}, such that
\[E_i\circ E_j=\frac{1}{|\Omega|}\sum_{k=0}^d q_{ij}^kE_k.\]

We further define diagonal matrices $E_i^*(x)$ \emph{with base point $x\in \Omega$} by the rule
\[(E_i^*(x))_{yy}=\Bigg\{ \begin{array}{cc}
    1  & (x,y)\in R_i \\
    0 & \text{otherwise}.
\end{array}\]
Essentially what we are doing here is taking the $x$th row of $A_i$ and putting it along the main diagonal to form $E_i^*(x)$. We call $\mathfrak{A}^*(x)=\Span_\C\{E_0^*(x),E_1^*(x),\cdots E_d^*(x)\}$ the \emph{dual Bose-Mesner Algebra} of $\mathcal{A}$ with respect to $x$.  

Now we define the \emph{Terwilliger algebra} with base point $x$ of the association scheme $\mathcal{A}=(\Omega,\{A_i\}_{0\leq i\leq d})$ to be the subalgebra of $M_{|\Omega|}(\C)$ generated by $\mathfrak{A}$ and $\mathfrak{A}^*(x)$. We denote this by $T(x)$. For the group association scheme we always take the base point to be the identity $e$ in the group. In this case we let $T(G)$ denote the Terwilliger algebra for the group association scheme.

Let
\[T_0(x)=\Span_\C \{E_i^*(x)A_jE_k^*(x)\colon 0\leq i,j,k\leq d\}.\]
From \cite{Bannaiarticle} we have $\dim T_0(x)=|\{(i,j,k)\colon p_{ij}^k\neq 0\}|$. We shall say that a Terwilliger algebra is \emph{triply regular} if $T(x)=T_0(x)$.

If $|\Omega|>1$, then the Terwilliger algebra $T(x)$ is non-commutative and semi-simple for all $x\in \Omega$ \cite{Terwilliger1}. As such, it has a Wedderburn decomposition. Given an association scheme $(\Omega,\{A_i\}_{0\leq i\leq d})$, the Terwilliger algebra $T(x)$ for this association scheme always has an irreducible ideal of dimension $(d+1)^2$; see \cite{WreathProduct1}. This Wedderburn component is called the \emph{primary component} and will be denoted by $V$.

Closely related to the Wedderburn decomposition of the Terwilliger algebra are those $T(x)-$modules that are irreducible when $T(x)$ acts on $\C^{|\Omega|}$ by left multiplication. We note that with this action $\C^{|\Omega|}$ decomposes into a direct sum of irreducible $T(x)-$ modules. Letting $\hat{x}\in \C^{|\Omega|}$ have $1$ in position $x$ and $0$ elsewhere, it is shown in \cite{Terwilliger1} that $\mathfrak{A}\hat{x}$ is an irreducible $T(x)-$module of dimension $d+1$. We call this irreducible module the \emph{primary module}.

We now say a Terwilliger algebra $T(x)$ is \emph{almost commutative} if every non-primary irreducible $T(x)-$module is $1-$ dimensional. There is a classification of such Terwilliger algebras when the underlying association scheme is commutative:

\begin{Thm}[Tanaka, \cite{Tanaka}]\label{thm:tanaka}  Let $\mathcal{A}=(\Omega,\{R_i\}_{0\leq i\leq d})$ be a commutative association scheme. Let $T(x)$ be the Terwilliger algebra of $\mathcal{A}$ for some $x\in \Omega$. The following are equivalent:
    \begin{enumerate}
        \item Every non-primary irreducible $T(x)-$module is $1-$ dimensional for some $x\in \Omega$.
        \item Every non-primary irreducible $T(x)-$module is $1-$ dimensional for all $x\in \Omega$.
        \item The intersection numbers of $\mathcal{A}$ have the property that for all distinct $h,i$ there is exactly one $j$ such that $p_{ij}^h\neq 0$ $(0\leq h,i,j\leq d)$.
        \item The Krein parameters of $\mathcal{A}$ have the property that for all distinct $h,i$ there is exactly one $j$ such that $q_{ij}^h\neq 0$ $(0\leq h,i,j\leq d)$.
        \item $\mathcal{A}$ is a wreath product of association schemes $\mathcal{A}_1,\mathcal{A}_2,\cdots, \mathcal{A}_n$ where each $\mathcal{A}_i$ is either a $1-$class association scheme or the group scheme of a finite abelian group.
    \end{enumerate} 
Moreover, a Terwilliger algebra satisfying these equivalent conditions is triply regular.
\end{Thm}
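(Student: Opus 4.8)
The plan is to prove the cycle $(5)\Rightarrow(2)\Rightarrow(1)\Rightarrow(3)\Rightarrow(5)$, to obtain $(3)\Leftrightarrow(4)$ by replaying the step $(1)\Rightarrow(3)$ on the dual Bose--Mesner algebra $\mathfrak{A}^*(x)$ (so that the roles of the $A_i$ and the $E_i$, hence of the intersection numbers $p_{ij}^k$ and the Krein parameters $q_{ij}^k$, are interchanged), and to read the triple--regularity claim off the explicit structure in $(5)$. Since $(2)\Rightarrow(1)$ is trivial, the content lies in the other three arrows together with the dualization.

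For $(5)\Rightarrow(2)$ I would first settle the two building blocks directly: a $1$-class scheme on $n\ge 2$ points has $T(x)\cong M_2(\C)\oplus\C$, whose single non-primary irreducible module is $1$-dimensional, and the group scheme of a finite abelian group $G$ has $T(e)=M_{|G|}(\C)$ and thus no non-primary module at all. I would then invoke the description of the irreducible $T(x)$-modules of a wreath product $\mathcal{A}_1\wr\cdots\wr\mathcal{A}_n$ in terms of those of the factors from \cite{WreathProduct1}; an induction on $n$ from these two base cases, tracking module dimensions through the wreathing, then shows that every non-primary irreducible module of the wreath product is $1$-dimensional, which is $(2)$.

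For $(1)\Rightarrow(3)$ I would argue by contraposition, using the dictionary $E_i^*(x)A_jE_k^*(x)\ne 0\Leftrightarrow p_{ij}^k\ne 0$ (which also underlies the formula $\dim T_0(x)=|\{(i,j,k):p_{ij}^k\ne0\}|$ quoted from \cite{Bannaiarticle}). Decomposing $\C^{|\Omega|}=\mathfrak{A}\hat{x}\oplus\bigoplus_m W_m$ into irreducible $T(x)$-modules and using $\dim E_i^*(x)\C^{|\Omega|}=k_i$ (the valency of $R_i$) together with $\dim E_i^*(x)\mathfrak{A}\hat{x}=1$, assumption $(1)$ pins the non-primary constituents down to exactly $k_i-1$ one-dimensional modules supported in shell $i$ for each $i\ge 1$, each of which moreover lies in a single $\mathfrak{A}$-eigenspace $E_s\C^{|\Omega|}$. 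If $(3)$ failed, say $p_{ij_1}^h\ne0\ne p_{ij_2}^h$ with $h\ne i$ and $j_1\ne j_2$, then $E_i^*(x)A_{j_1}E_h^*(x)$ and $E_i^*(x)A_{j_2}E_h^*(x)$ are two nonzero elements of $T(x)$ with disjoint support, and one checks that the $T(x)$-submodule of $\C^{|\Omega|}$ they generate (starting, say, from $\hat{z}$ with $(x,z)\in R_h$) cannot be a sum of $1$-dimensional modules of the form just described; that is, it has a non-primary irreducible constituent of dimension $\ge 2$, contradicting $(1)$. The delicate point here is the bookkeeping with the dihedral symmetries of the intersection numbers needed to see that the two witnesses really are irreconcilable. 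Running this whole argument on the dual side $(\mathfrak{A}^*(x),\{E_i\},\{q_{ij}^k\})$ --- the standard duality of Terwilliger's framework \cite{Terwilliger1} --- gives $(1)\Leftrightarrow(4)$, and since the hypothesis $(1)$ is unchanged under the dualization we conclude $(3)\Leftrightarrow(4)$.

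The remaining and most substantial step is $(3)\Rightarrow(5)$, the reconstruction of the wreath decomposition. By the dictionary above, $(3)$ says that $E_i^*(x)\mathfrak{A}E_h^*(x)$ is $1$-dimensional for every pair $h\ne i$; I would use this rigidity to produce a maximal chain $\{e\}=C_0\subsetneq C_1\subsetneq\cdots\subsetneq C_n=\Omega\times\Omega$ of closed subsets of the scheme (unions of relations closed under relational composition), to verify that each quotient scheme $\mathcal{A}_k=C_k/C_{k-1}$ again satisfies $(3)$ and admits no proper nontrivial closed subset, and to prove that a commutative scheme satisfying $(3)$ and having no proper nontrivial closed subset is either a $1$-class scheme or the group scheme of a finite abelian group (the \emph{thin} case). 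The standard characterization of wreath products by towers of closed subsets then identifies $\mathcal{A}$ with $\mathcal{A}_1\wr\cdots\wr\mathcal{A}_n$. Triple regularity drops out of $(3)$ as well: each word $E_{a_0}^*(x)A_{b_1}E_{a_1}^*(x)\cdots A_{b_r}E_{a_r}^*(x)$ collapses into $T_0(x)$, since whenever consecutive shells satisfy $a_{s-1}\ne a_s$ the joining relation $b_s$ is forced (or the factor vanishes), the case $a_{s-1}=a_s$ being controlled by the filtration, so $T(x)=T_0(x)$. I expect the main obstacle to be the structural dichotomy at the heart of this step --- ruling out, under hypothesis $(3)$, every primitive commutative scheme other than the $1$-class scheme and the abelian group schemes, and checking both that the chain of closed subsets extracted is genuinely linearly ordered (this is exactly where $(3)$, rather than some weaker sparseness of composition, is needed) and that the successive quotients reassemble into an honest wreath product rather than a more general extension.
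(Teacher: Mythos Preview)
This theorem is not proved in the paper at all: it is stated as Tanaka's result and attributed to \cite{Tanaka}, and the paper simply \emph{uses} it (chiefly condition~(3) and the triple-regularity consequence) as a black box. There is therefore no ``paper's own proof'' to compare your proposal against; any comparison would have to be made with Tanaka's original article, not with the present paper.

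That said, as a sketch your outline follows the expected architecture (a cycle through $(5)\Rightarrow(2)\Rightarrow(1)\Rightarrow(3)\Rightarrow(5)$ plus duality for $(3)\Leftrightarrow(4)$), and you correctly identify $(3)\Rightarrow(5)$ as the structural heart. Two places where the sketch is thin rather than wrong: in $(1)\Rightarrow(3)$, the claim that the module generated from $\hat z$ by $E_i^*(x)A_{j_1}E_h^*(x)$ and $E_i^*(x)A_{j_2}E_h^*(x)$ cannot decompose into $1$-dimensional non-primary pieces needs a genuine argument (the ``disjoint support'' observation alone does not preclude such a decomposition); and your triple-regularity argument only handles the step $a_{s-1}\neq a_s$, while ``the case $a_{s-1}=a_s$ being controlled by the filtration'' is exactly where the work lies, since for $a_{s-1}=a_s$ the relation $b_s$ is \emph{not} forced by $(3)$ and one must actually reduce products $E_i^*(x)A_{j}E_i^*(x)A_{j'}E_i^*(x)$ back into $T_0(x)$.
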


 For the definition of the wreath product of association schemes see \cite{WreathProd}. We note that a Terwilliger algebra for a commutative association scheme is almost commutative if it satisfies the five equivalent conditions in Theorem \ref{thm:tanaka}.
 The focus of this paper is to determine exactly for which groups the Terwilliger algebra of the group association scheme is almost commutative. The main result is Theorem \ref{thm:acclassify}. As all group association schemes are commutative, we note that Theorem \ref{thm:tanaka} applies to the Terwilliger algebras under consideration.

\section{Camina Groups}

Camina groups were first studied by Camina in \cite{Camina} as a generalization of Frobenius groups and extra special groups. Results on Camina groups can be found in the survey \cite{Lewis1}.

A group $G$ is a \emph{Camina group} if every conjugacy class of $G$ outside of $G'$ is a coset of $G'$ in $G$. An important result here is:

\begin{Thm}[Dark and Scoppola, \cite{Dark1}] Let $G$ be a Camina group. Then one of the following is true:
        \begin{itemize}[leftmargin=*]
            \item $G$ is a Frobenius group whose Frobenius complement is cyclic.
            \item $G$ is a Frobenius group whose Frobenius complement is $Q_8$, the quaternion group of order $8$.
            \item $G$ is a $p-$group for some prime $p$ of nilpotency class $2$ or $3$.
        \end{itemize}
\end{Thm}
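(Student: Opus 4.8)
The plan is to prove the trichotomy by first separating the two fundamentally different regimes a Camina group can inhabit: the case where $G$ is a Frobenius group, and the case where $G$ is nilpotent (in fact a $p$-group). So the first step is to show that a Camina group $G$ is either Frobenius or nilpotent. This is where the defining property does the heavy lifting: since every conjugacy class outside $G'$ is a full coset of $G'$, for any $g \in G \setminus G'$ the class of $g$ has size $|G'|$, so the centralizer $C_G(g)$ satisfies $|C_G(g)| = |G/G'| \cdot |C_G(g) \cap G'| \cdot$ (a correction) — more precisely $|C_G(g)| = |G| / |G'|$ when $g \notin G'$, which forces $C_G(g)$ to map isomorphically onto $G/G'$ and to intersect $G'$ trivially. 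One then shows that the set $G' \cup \{1\}$, together with these complements, gives the kernel/complement structure of a Frobenius group unless $G' = G$, i.e. unless $G$ is perfect, or unless some collapse forces nilpotency; the standard dichotomy (due to Camina's original paper, which I would cite as input) is that $G/G'$ acts Frobenius-ly on $G'$ (giving the Frobenius case) or $G$ is a $p$-group. I would quote this dichotomy as the known starting point rather than re-derive it.

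Next, in the Frobenius case, let $G = K \rtimes H$ with kernel $K$ and complement $H$. The Camina condition says every class outside $K = G'$ is a coset of $K$; translating this through the Frobenius structure, it means $H$ acts on $K$ in a very restricted way — essentially that every nonidentity coset $Kh$ is a single conjugacy class, which forces $H$ to act so that the induced action on each such coset is transitive under $K$-conjugation. The key arithmetic is that a Frobenius complement acting with this much homogeneity must have all its Sylow subgroups cyclic or generalized quaternion (this is the classical structure theory of Frobenius complements), and the Camina condition sharpens "generalized quaternion" down to exactly $Q_8$ and forces the complement to be either cyclic or $Q_8$ outright. The step I expect to require the most care is pinning down why larger generalized quaternion groups, binary octahedral/icosahedral complements, and metacyclic complements of the form $Z_m \rtimes Z_n$ are all excluded: one counts conjugacy classes of $H$ acting on $K \setminus \{1\}$ and shows that the coset-equals-class requirement collapses unless $H$ is cyclic or $Q_8$.

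Finally, in the nilpotent case one shows $G$ must be a $p$-group: a nilpotent Camina group is the direct product of its Sylow subgroups, and the Camina property does not survive a nontrivial direct product (a class in a factor is not a coset of the whole derived subgroup), so only one Sylow subgroup is nontrivial. Then one bounds the nilpotency class: using that $G/G'$ is elementary abelian-ish and that $[G', G]$ is forced to be small by the coset condition applied to commutators, one shows $\gamma_4(G) = 1$, i.e. nilpotency class at most $3$; the cases of class exactly $2$ (extraspecial-like) and class $3$ are both realized, so no further reduction is possible. The main obstacle overall is the Frobenius-complement classification step — everything else is bookkeeping with the coset condition, but excluding all the exceptional Frobenius complements requires genuinely invoking (and then refining via the Camina hypothesis) the Zassenhaus classification of Frobenius complements.
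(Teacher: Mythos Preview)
The paper does not prove this theorem. It is stated with attribution to Dark and Scoppola via the citation \cite{Dark1} and is immediately followed by ``We will also need:'' and further cited results; it functions purely as background input to Section~3. There is therefore no proof in the paper to compare your proposal against.

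As a side remark on your sketch itself: the broad architecture you describe---first splitting into the Frobenius versus $p$-group dichotomy (which you correctly note goes back to Camina's original paper and its successors), then invoking the Zassenhaus-type structure theory of Frobenius complements to cut down to cyclic or $Q_8$, and finally bounding the nilpotency class in the $p$-group case---is indeed the shape of the argument in the literature. But you should be aware that the last step, showing a Camina $p$-group has class at most $3$, is precisely the contribution of Dark and Scoppola and is substantially harder than ``bookkeeping with the coset condition''; your one-line claim that one ``shows $\gamma_4(G)=1$'' hides the real work. Similarly, in the Frobenius case the exclusion of non-cyclic, non-$Q_8$ complements is not just a refinement of Zassenhaus but uses the Camina hypothesis in a specific way (this part is due to earlier authors, not Dark--Scoppola). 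If you intend to actually supply a proof rather than cite one, those two places are where genuine arguments, not gestures, are required.
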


We will also need:

\begin{Prop}[Macdonald, \cite{MacDonald}]\label{thm:Camina3}
        Let $G$ be a Camina $p-$group of nilpotency class $3$. Then the lower central series of $G$ is $\{e\}\leq Z(G)\leq G'\leq G$. Furthermore, $[G\colon G']=p^{2n}$ and $[G'\colon Z(G)]=p^n$ for some even integer $n$. Additionally, $G/G'$ and $G'/Z(G)$ are elementary abelian groups.
\end{Prop}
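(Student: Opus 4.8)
The plan is to dispatch the chain of subgroups and the two ``elementary abelian'' claims first (these are short), and then treat the numerical identities, which I expect to be the hard part. Write $\gamma_i=\gamma_i(G)$ for the lower central series; since $G$ has class exactly $3$ we have $\gamma_4=\{e\}\neq\gamma_3$, hence $\gamma_3\le Z(G)$ and $G'=\gamma_2$ is abelian (because $[\gamma_3,G]=[\gamma_2,\gamma_2]=\{e\}$). If some $z\in Z(G)$ lay outside $G'$, the Camina property would make the conjugacy class of $z$ equal to $zG'$, i.e.\ $[z,G]=G'\neq\{e\}$, contradicting $z\in Z(G)$; so $\gamma_3\le Z(G)\le G'$. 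The equality $Z(G)=\gamma_3$ is postponed to the last step.

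For the elementary abelian statements I would first observe that $\overline G:=G/\gamma_3$ is again a Camina group (a quotient of a Camina group by a normal subgroup of the derived subgroup is Camina, or abelian) of class exactly $2$, with $\overline G/\overline G'\cong G/G'$ and $\overline G'\cong G'/\gamma_3$ (note $\gamma_3<G'$, else the class would be $\le 1$). So it suffices to prove that a Camina $p$-group $H$ of class $2$ has $H/H'$ and $H'$ elementary abelian. For such $H$ one has $H'\le Z(H)$, and in fact $Z(H)=H'$ (a central element outside $H'$ would have a one-element conjugacy class, which cannot be a coset of $H'$). For $x\notin H'$ the map $g\mapsto[x,g]$ is a homomorphism $H\to H'$ (as $[x,g]\in H'\le Z(H)$), surjective by the Camina property and trivial on $H'$, hence induces an epimorphism $H/H'\twoheadrightarrow H'$. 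Picking $x\notin H'$ with $x^p\in H'$ (possible, since $H/H'$ is a nontrivial $p$-group), the fact that $h\mapsto[g,h]$ is a homomorphism gives $[g,x]^p=[g,x^p]\in[H,H']=\{e\}$ for every $g$; since every element of $H'$ is a commutator $[x,g]$, this yields $\exp H'=p$. Then, for every $x\notin H'$, the epimorphism $g\mapsto[x,g]$ sends $H^p$ into $(H')^p=\{e\}$, so $H^p\le C_H(x)$; since the elements outside $H'$ generate $H$, $H^p\le\bigcap_{x\notin H'}C_H(x)=Z(H)=H'$, i.e.\ $\exp(H/H')=p$. Applying this to $\overline G$ shows that $G/G'$ and $G'/\gamma_3$ are elementary abelian.

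It remains to handle $Z(G)=\gamma_3$, $[G:G']=p^{2n}$, $[G':Z(G)]=p^n$, and the parity of $n$; this is where I expect the real work to lie. Set $V:=G/G'$ and $W:=G'/\gamma_3$, now $\mathbb F_p$-vector spaces, and introduce the alternating bilinear map $\beta\colon V\times V\to W$, $\beta(\overline g,\overline h)=[g,h]\gamma_3$, and the bilinear map $\delta\colon W\times V\to\gamma_3$, $\delta(\overline c,\overline g)=[c,g]$ for $c\in G'$ (well defined because $\gamma_3\le Z(G)$ and $G'$ is abelian). Reading the Hall--Witt identity modulo $\gamma_4=\{e\}$ gives the Jacobi relation $\delta(\beta(u,v),w)+\delta(\beta(v,w),u)+\delta(\beta(w,u),v)=0$ for all $u,v,w\in V$. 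The Camina property is precisely the statement that $\beta(\overline x,-)\colon V\to W$ is onto for every $\overline x\neq 0$; hence for each nonzero functional $\varphi\in W^{*}$ the alternating form $\varphi\circ\beta$ on $V$ is nondegenerate, forcing $\dim V=:2n$ to be even, i.e.\ $[G:G']=p^{2n}$. Moreover $c\in G'$ is central iff $\overline c$ lies in the left radical of $\delta$, so $Z(G)/\gamma_3$ equals that left radical; showing it is $0$ (which also identifies $G'/\gamma_3$ with $G'/Z(G)$), and simultaneously pinning down $\dim W=n$ together with the parity of $n$, is the crux. My plan is to exploit the combination of (i) the surjectivity of all the maps $\beta(\overline x,-)$, (ii) the Jacobi relation, and (iii) the facts that the image of $\beta$ spans $W$ and the image of $\delta$ spans $\gamma_3$: concretely, to feed an element of the left radical of $\delta$, written as $\beta(\overline x,\overline y)$ for varying $\overline x\neq 0$, into the Jacobi relation, and, for the dimension count and parity, to analyse the family of trilinear forms $(u,v,w)\mapsto\psi(\delta(\beta(u,v),w))$ on $V$ (one for each $\psi\in\gamma_3^{*}$) and the nondegenerate alternating structure they induce on $W$. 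I expect this bilinear-form bookkeeping --- rather than any single trick --- to be the main obstacle, and it is in essence the content of Macdonald's argument.
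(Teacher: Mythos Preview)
The paper does not give a proof of this proposition at all; it is simply quoted from Macdonald's paper and used as input. So there is no in-paper argument to compare your attempt against.

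Regarding your attempt on its own: the containments $\gamma_3\le Z(G)\le G'$, the passage to the class-$2$ Camina quotient $G/\gamma_3$, the argument that $H/H'$ and $H'$ are elementary abelian for any class-$2$ Camina $p$-group $H$, and the deduction that $\dim_{\mathbb F_p}(G/G')$ is even (via nondegeneracy of $\varphi\circ\beta$ for every nonzero $\varphi\in W^{*}$) are all correct. One small remark: since $\gamma_3\le Z(G)$, the group $G'/Z(G)$ is a quotient of $G'/\gamma_3$, so its being elementary abelian follows immediately from the class-$2$ computation; you need not wait for $Z(G)=\gamma_3$ to conclude that part.

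The genuine gap is that you explicitly do not prove the three remaining assertions --- $Z(G)=\gamma_3$, $[G':Z(G)]=p^{n}$ when $[G:G']=p^{2n}$, and the evenness of $n$ --- but only outline a plan. These are precisely the nontrivial content of Macdonald's theorem. Your sketch (feed a left-radical element $\overline c=\beta(\overline x,\overline y)$ into the Jacobi identity; analyse the trilinear forms $\psi(\delta(\beta(u,v),w))$) is plausible in spirit but is not yet an argument: for instance, the Jacobi relation with $\delta(\overline c,\cdot)\equiv 0$ only yields $\delta(\beta(\overline y,w),\overline x)=\delta(\beta(\overline x,w),\overline y)$ for all $w$, and it is not clear how to extract $\overline c=0$ from this without further input (such as surjectivity of $\delta(\overline c,\cdot)$ for nonzero $\overline c$, which is itself part of what has to be established). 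Likewise, pinning down $\dim W=n$ exactly, and then $n$ even, from the surjectivity of the maps $\beta(\overline x,\cdot)$ is not routine bookkeeping. So what you have is a correct setup together with an honest acknowledgement that the hard half of the proof remains to be written.
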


\begin{Prop}[Macdonald, \cite{MacDonald}]\label{prop:cam2center}
        Let $G$ be a Camina p-group. Then $Z(G)$ is an elementary abelian group.
    \end{Prop}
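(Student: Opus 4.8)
The plan is to combine the definition of a Camina group with the structural results recalled above. First note the standard reformulation, which is immediate from the definition: $G$ is a Camina group iff for every $g\in G\setminus G'$ the conjugacy class $g^{G}$ is precisely the coset $gG'$ (it is automatically contained in $gG'$, and a coset of $G'$ containing $g$ must equal $gG'$); equivalently $\{[g,x]:x\in G\}=G'$. Since a Camina $p$-group is non-abelian --- indeed by the Dark--Scoppola theorem it has nilpotency class $2$ or $3$ --- I would handle these two cases in turn, after the preliminary observation that $Z(G)\subseteq G'$: if some $z\in Z(G)$ lay outside $G'$, then $\{z\}=z^{G}=zG'$, forcing $G'=\{e\}$.

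In the class-$2$ case, $G'\le Z(G)$ holds automatically, so the previous remark gives $Z(G)=G'$, and it suffices to exhibit a single suitable element. I would pick $g\in G\setminus G'$ with $g^{p}\in G'$; this is possible because $G/G'$ is a non-trivial finite $p$-group and hence contains an element of order $p$. Now fix any $c\in G'=Z(G)$. Since $c$ is central, $g$ and $c$ commute, so $(gc)^{p}=g^{p}c^{p}$; and since $c\in G'$ we have $gc\in gG'=g^{G}$, so $gc$ is conjugate to $g$ and hence $g^{p}c^{p}=(gc)^{p}$ is conjugate to $g^{p}$. But $g^{p}\in G'=Z(G)$ is central and thus equal to its unique conjugate, forcing $c^{p}=e$. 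As $c$ was arbitrary, $Z(G)=G'$ has exponent dividing $p$ and so is elementary abelian.

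For the class-$3$ case I would lean on Proposition~\ref{thm:Camina3}: the lower central series is $\{e\}\le Z(G)\le G'\le G$, so $Z(G)=\gamma_{3}(G)=[G',G]$ and $G'/Z(G)$ is elementary abelian, the latter meaning $a^{p}\in Z(G)$ for every $a\in G'$. The key point is that for fixed $z\in G$ the map $a\mapsto[a,z]$ is a group homomorphism from $G'$ into $Z(G)$: this follows from the identity $[a_{1}a_{2},z]=[a_{1},z]^{a_{2}}[a_{2},z]$ together with the fact that $[a_{1},z]\in[G',G]=Z(G)$ is central. Using this homomorphism and that $G'$ is generated by the commutators $[x,y]$, we get that $Z(G)=[G',G]$ is generated by the (central, pairwise commuting) elements $[[x,y],z]$ with $x,y,z\in G$; and for each of these $[[x,y],z]^{p}=[[x,y]^{p},z]$, which is trivial because $[x,y]^{p}\in Z(G)$ by the above. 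Hence the abelian group $Z(G)$ is generated by elements of order dividing $p$, so it is elementary abelian.

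I expect the only delicate part to be the commutator bookkeeping in the class-$3$ step --- specifically, making sure the relevant commutators really do land in $\gamma_{3}(G)=Z(G)$, since it is that centrality which licenses treating $a\mapsto[a,z]$ as a homomorphism on $G'$; this is exactly what Proposition~\ref{thm:Camina3} is there to supply. The class-$2$ case is by comparison immediate once $Z(G)=G'$ is known.
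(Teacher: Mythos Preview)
The paper does not supply its own proof of this proposition: it is quoted from Macdonald \cite{MacDonald} and used as a black box, so there is no in-paper argument to compare against. Your proof is correct and self-contained. The class-$2$ argument is clean; the only point worth a small remark is that the choice of $g$ with $g^{p}\in G'$ is exactly what is needed so that $g^{p}\in Z(G)$ and hence equals all of its conjugates. In the class-$3$ argument, your use of Proposition~\ref{thm:Camina3} to get $\gamma_{3}(G)=Z(G)$ and $(G'/Z(G))^{p}=1$ is legitimate within the paper's framework, and the verification that $a\mapsto[a,z]$ is a homomorphism on $G'$ is the right hinge: since $[a_{1},z]\in\gamma_{3}(G)=Z(G)$, the conjugation by $a_{2}$ in $[a_{1}a_{2},z]=[a_{1},z]^{a_{2}}[a_{2},z]$ really does disappear. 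One could streamline by noting that this already gives $[a^{p},z]=[a,z]^{p}$ for all $a\in G'$, so every element $[a,z]$ of a generating set of $\gamma_{3}(G)$ has $p$th power $[a^{p},z]=e$, without needing to pass to the basic commutators $[[x,y],z]$; but your version is equally valid.
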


    \begin{Lem}[\cite{Lewis1}]\label{lem:dercam}
            Let $G$ be a non-abelian Camina group. Then $Z(G)\leq G'$.
        \end{Lem}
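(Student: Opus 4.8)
The plan is to argue by contradiction directly from the definition of a Camina group. Suppose $Z(G)\not\leq G'$, so there exists $z\in Z(G)$ with $z\notin G'$. Then $z$ lies in a conjugacy class outside of $G'$, so the Camina hypothesis applies to $z$: its conjugacy class $z^G$ must be a full coset of $G'$, namely $z^G=zG'$.

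Next I would use centrality of $z$. Since $z\in Z(G)$, every $G$-conjugate of $z$ equals $z$, so $z^G=\{z\}$. Combining this with the previous step gives $zG'=\{z\}$, and hence $|G'|=1$, i.e.\ $G'=\{e\}$. This means $G$ is abelian, contradicting the hypothesis that $G$ is non-abelian. Therefore no such $z$ exists, and $Z(G)\leq G'$.

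There is essentially no technical obstacle here; the entire content is recognizing that a central element outside $G'$ would have a singleton conjugacy class, which cannot be a nontrivial coset of $G'$. The only point worth stating carefully is that the Camina condition, as defined, governs precisely the conjugacy classes \emph{not} contained in $G'$, so it is legitimate to apply it to $z$ once we know $z\notin G'$. (If one prefers, the same argument can be phrased using the equivalent formulation that for every $g\in G\setminus G'$ one has $g^G=gG'$.)
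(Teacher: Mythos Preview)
Your proof is correct. The paper does not actually prove this lemma; it merely states it with a citation to \cite{Lewis1}, so there is no in-paper argument to compare against. Your contradiction argument is the standard one: a central element has a singleton conjugacy class, and the Camina condition forces the class of any element outside $G'$ to be a full coset $zG'$, which has size $|G'|$; equating sizes gives $G'=\{e\}$ and hence $G$ abelian, contrary to hypothesis. The only implicit step you might make explicit is that since $G'\trianglelefteq G$, any conjugacy class is either contained in $G'$ or disjoint from it, so $z\notin G'$ genuinely places $z^G$ among the classes to which the Camina hypothesis applies.
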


        \begin{Prop}[\cite{Lewis1}]\label{prop:cammod}
        Suppose that $G$ is a Camina group and $N\trianglelefteq G'$. Then $G/N$ is a Camina group.
    \end{Prop}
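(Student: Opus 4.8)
The plan is to work directly from the definition of a Camina group and the behaviour of conjugacy classes under the quotient map $\pi\colon G\to G/N$. First I would put the hypothesis in usable form: since $N$ is normal in $G$ with $N\leq G'$, the derived subgroup of the quotient is $(G/N)'=G'/N$, and a coset $gN$ lies in $(G/N)'$ if and only if $g\in G'N=G'$. Consequently, if $\bar g=gN$ is an element of $G/N$ lying \emph{outside} $(G/N)'$, then its preimage $g$ lies outside $G'$ in $G$. This is the one place the inclusion $N\leq G'$ is essential, and it is the only point requiring any care.

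Next I would use that the image of a conjugacy class under a surjective homomorphism is the conjugacy class of the image, i.e.\ $\pi(g^G)=\pi(g)^{G/N}=\bar g^{\,G/N}$. Fix $\bar g\notin (G/N)'$, so that $g\notin G'$ by the previous paragraph. Because $G$ is a Camina group, the class $g^G$ is a coset of $G'$, necessarily $g^G=gG'$. Applying $\pi$ gives
\[
\bar g^{\,G/N}=\pi(g^G)=\pi(gG')=\bar g\,\pi(G')=\bar g\,(G'/N)=\bar g\,(G/N)'.
\]
Thus every conjugacy class of $G/N$ lying outside $(G/N)'$ is a coset of $(G/N)'$, which is exactly the statement that $G/N$ is a Camina group.

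An equivalent route, if one prefers to avoid quoting ``the image of a conjugacy class is a conjugacy class'', is to use the standard reformulation that $G$ is Camina precisely when $\{[g,h]\colon h\in G\}=G'$ for every $g\notin G'$; since $[\overline g,\overline h]=\overline{[g,h]}$ and every element of $G/N$ has the form $\overline h$, surjectivity of $\pi$ turns this identity into $\{[\overline g,\overline h]\colon \overline h\in G/N\}=G'/N=(G/N)'$, again yielding the Camina property. Either way the argument is short, and there is no genuine obstacle; the only bookkeeping point is that $N\leq G'$ is what forces an element outside $(G/N)'$ to lift to an element outside $G'$, so that the Camina hypothesis on $G$ can be applied.
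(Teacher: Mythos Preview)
Your proof is correct. Note, however, that the paper does not supply its own proof of this proposition; it is quoted without proof from the survey \cite{Lewis1}, so there is no in-paper argument to compare against. Your reading of the hypothesis as ``$N\trianglelefteq G$ with $N\le G'$'' is the intended one (otherwise $G/N$ is not even defined), and both of your routes---pushing the equality $g^G=gG'$ forward along $\pi$, or equivalently pushing forward the commutator identity $\{[g,h]:h\in G\}=G'$---are standard and complete.
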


Another result that will be of particular interest to us is:

\begin{Prop}[Macdonald, Theorem 5.2(i), \cite{MacDonald}]\label{lem:pcamclass}
    Let $G$ be a Camina $p-$group. Let $\gamma_2(G)=[G,G]$ and $\gamma_3(G)=[\gamma_2(G),G]=Z(G)$ be the second and third terms in the lower central series of $G$. Then
        \[x^G=x\gamma_2(G)\text{ if }x\in G\setminus \gamma_2(G),\]
        \[x^G=x\gamma_3(G)\text{ if }x\in \gamma_2(G)\setminus \gamma_3(G),\]
        \[x^G=\{x\} \text{ if }x\in \gamma_3(G).\]
\end{Prop}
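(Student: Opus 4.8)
The statement splits into three cases, and the first and third are immediate from the definitions, so I would dispose of them first. If $x\in G\setminus\gamma_2(G)=G\setminus G'$, then by the definition of a Camina group the class $x^G$ is a coset of $G'$; since it contains $x$, it is $x\gamma_2(G)$, which is the first line. If $x\in\gamma_3(G)=Z(G)$, then $x$ is central, so $x^G=\{x\}$, the third line. All the work is in the middle case $x\in\gamma_2(G)\setminus\gamma_3(G)$. I would first note that we may assume $G$ is nontrivial, and then the hypothesis $\gamma_3(G)=Z(G)$ forces $G$ to have nilpotency class exactly $3$ (a nontrivial $p$-group of class $\le 2$ has $Z(G)\ne\{e\}$ but $\gamma_3(G)=\{e\}$); in particular $\gamma_4(G)=\{e\}$.

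Now fix $x\in\gamma_2(G)\setminus\gamma_3(G)$. The plan is to show that $x^G=x[x,G]$ where $[x,G]=\{[x,g]:g\in G\}$ is a subgroup of $\gamma_3(G)$, and then that this subgroup is all of $\gamma_3(G)$. For the first point: for every $g\in G$ one has $x^g=x[x,g]$ with $[x,g]\in[\gamma_2(G),G]=\gamma_3(G)$, so $x^G\subseteq x\gamma_3(G)$; moreover, since $\gamma_3(G)=Z(G)$ is central, the commutator identity $[x,gh]=[x,h]\,[x,g]^h$ reduces to $[x,gh]=[x,g][x,h]$, so $g\mapsto[x,g]$ is a homomorphism $G\to\gamma_3(G)$ and its image $[x,G]$ is a subgroup. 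Hence $x^G=x[x,G]$, and it remains to prove $[x,G]=\gamma_3(G)$.

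Suppose not. Then $[x,G]$ is a proper subgroup of $\gamma_3(G)$, which is a finite $p$-group (indeed elementary abelian, by Proposition \ref{prop:cam2center}), so $[x,G]$ lies in some maximal subgroup $M$ of $\gamma_3(G)$. Since $M\subseteq Z(G)$ we have $M\trianglelefteq G$, and since $M\subseteq\gamma_3(G)\subseteq\gamma_2(G)=G'$, Proposition \ref{prop:cammod} shows $G/M$ is a Camina group, clearly a $p$-group. Its lower central series satisfies $\gamma_3(G/M)=\gamma_3(G)M/M=\gamma_3(G)/M\ne\{e\}$ (because $M$ is proper in $\gamma_3(G)$) and $\gamma_4(G/M)=\gamma_4(G)M/M=\{e\}$, so $G/M$ has nilpotency class exactly $3$; Proposition \ref{thm:Camina3} then gives $Z(G/M)=\gamma_3(G/M)=\gamma_3(G)/M$. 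But $[x,G]\subseteq M$ says $[xM,G/M]=\{eM\}$, i.e. $xM\in Z(G/M)=\gamma_3(G)/M$, whence $x\in\gamma_3(G)$ — contradicting $x\in\gamma_2(G)\setminus\gamma_3(G)$. Therefore $[x,G]=\gamma_3(G)$, so $x^G=x[x,G]=x\gamma_3(G)$, completing the argument.

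The single genuine obstacle here is the equality $[x,G]=\gamma_3(G)$ in the middle case; the inclusion and the two outer cases are formal. The device I expect to make it work is passing to $G/M$ for $M$ a maximal subgroup of $\gamma_3(G)$, so as to bring in the structural fact that a class-$3$ Camina $p$-group has $Z=\gamma_3$ (Proposition \ref{thm:Camina3}); the point needing care is checking that this quotient genuinely has class $3$ rather than collapsing to class $2$, which is exactly why $M$ is chosen properly inside $\gamma_3(G)$ and why $\gamma_4(G)=\{e\}$ is used.
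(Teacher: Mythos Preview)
The paper does not give its own proof of this proposition: it is quoted verbatim as Macdonald's Theorem~5.2(i) from \cite{MacDonald} and used as a black box. So there is no ``paper's proof'' to compare against; your proposal stands on its own merits.

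Your argument is correct. The first and third cases are indeed immediate from the Camina definition and from $\gamma_3(G)=Z(G)$, respectively. For the middle case, the two ingredients you isolate are exactly right: (i) since $[x,G]\subseteq\gamma_3(G)=Z(G)$, the map $g\mapsto[x,g]$ is a homomorphism and $x^G=x[x,G]$; (ii) $[x,G]$ is all of $\gamma_3(G)$. Your proof of (ii) via passage to $G/M$ for $M$ maximal in $\gamma_3(G)$ is clean: the crucial point --- that $G/M$ still has class exactly $3$, so that Proposition~\ref{thm:Camina3} yields $Z(G/M)=\gamma_3(G)/M$ --- is precisely what forces the contradiction $x\in\gamma_3(G)$. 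You also correctly invoke Proposition~\ref{prop:cammod} (noting $M\le Z(G)$ gives $M\trianglelefteq G$ and $M\le G'$). One small remark on presentation: when you say ``we may assume $G$ is nontrivial, and then the hypothesis $\gamma_3(G)=Z(G)$ forces class exactly $3$,'' the cleaner logic is that the very existence of an $x\in\gamma_2(G)\setminus\gamma_3(G)$ already rules out the abelian case, and then your argument for class $\le 2$ being impossible goes through; what you wrote is fine in spirit but slightly out of order.
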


A result for Camina groups that are also Frobenius groups that we use is:

\begin{Thm}[Cangelmi and Muktibodh, \cite{Few_Conj}]\label{thm:twoclass}
     Let $G$ be a finite group. Then, $G$ is a Camina group and $G'$ is the union of two conjugacy classes if and only if either $G$ is a Frobenius group with Frobenius kernel $\Z_p^r$ and Frobenius complement $\Z_{p^r-1}$, or $G$ is an extra-special $2-$group.
\end{Thm}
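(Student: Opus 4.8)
The plan is to prove the two implications separately, with the structural facts on Camina groups collected above --- the Dark--Scoppola trichotomy, Macdonald's class equations, and Lemma~\ref{lem:dercam} --- carrying most of the weight.

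For sufficiency I would just verify that each listed family has $G'$ equal to a union of exactly two conjugacy classes and satisfies the Camina condition. If $G$ is Frobenius with kernel $K\cong\Z_p^r$ and cyclic complement $H\cong\Z_{p^r-1}$, then $G/K\cong H$ is abelian so $G'\le K$, while the fixed-point-free coprime action of $H$ on $K$ forces $[K,H]=K\le G'$, hence $G'=K$; since $H$ then acts regularly on the $p^r-1$ non-identity elements of the abelian group $K$, the set $K\setminus\{e\}$ is a single conjugacy class and so $G'$ is a union of two classes. For $g=kh$ with $h\ne e$, conjugating by $K$ sweeps the kernel part of $g$ over the whole coset $K$ (because $1-h$ acts invertibly on $K$) and conjugating by the abelian $H$ keeps $g$ inside $G'g$, so $g^G=G'g$ and $G$ is Camina. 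If $G$ is an extra-special $2$-group then $G'=Z(G)=\Phi(G)\cong\Z_2$ consists of two singleton classes, and for non-central $g$ one has $[g,G]=G'$, giving $g^G=gG'$.

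For necessity, suppose $G$ is Camina with $G'=\{e\}\cup D$ for a single conjugacy class $D$. Then $G'\ne\{e\}$, so $G$ is non-abelian and Dark--Scoppola splits into three cases. If $G$ is a $p$-group of class $3$, then by Propositions~\ref{thm:Camina3} and~\ref{lem:pcamclass} the conjugacy classes inside $G'=\gamma_2(G)$ are the singletons of $Z(G)=\gamma_3(G)$ together with the cosets of $Z(G)$ contained in $\gamma_2(G)\setminus\gamma_3(G)$; since $|Z(G)|\ge 2$ and $[G':Z(G)]\ge p^2$, this yields at least three classes, a contradiction. If $G$ is a $p$-group of class $2$, then $G'\le Z(G)$ while Lemma~\ref{lem:dercam} gives $Z(G)\le G'$, so $G'=Z(G)$ consists of $|G'|$ singleton classes; hence $|G'|=2$, forcing $p=2$, and then $[g^2,h]=[g,h]^2=e$ for all $g,h$ shows $G^2\le Z(G)=G'$, so $\Phi(G)=G'G^2=G'$ and $G$ is extra-special. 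Finally, if $G$ is Frobenius with kernel $K$ and complement $H$, coprime fixed-point-free action again gives $[K,H]=K\le G'$; when $H$ is cyclic, $G/K$ is abelian so $G'=K$ and $D=K\setminus\{e\}$ is one class, and were $K$ non-abelian its derived subgroup $K'$ would be a proper nontrivial normal subgroup of $G$ (recall a Frobenius kernel is nilpotent), so an element of $K'\setminus\{e\}$ and one of $K\setminus K'$ would lie in distinct $G$-classes, contradicting $K\setminus\{e\}=D$; hence $K$ is abelian, $G$-conjugacy on $K$ is the $H$-action, $D$ is one $H$-orbit with trivial stabilisers, so $|H|=|K|-1$, and transitivity of $H\le\Aut(K)$ forces every non-identity element of $K$ to have order $p$, whence $K\cong\Z_p^r$ and $H\cong\Z_{p^r-1}$. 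When $H\cong Q_8$, instead $G'/K\cong Q_8'\cong\Z_2$, so $\{e\}$, the non-empty set $K\setminus\{e\}$, and the non-empty coset $G'\setminus K$ lie in three distinct conjugacy classes of $G$, again impossible.

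I expect the Frobenius cyclic-complement branch to be the main obstacle: one must pin down $G'=K$, rule out non-abelian $K$ using the normality of $K'$, and then recognise the resulting transitive linear group $H\le\Aut(K)$ --- which is exactly where the order count $|H|=|K|-1$ and the exponent-$p$ conclusion combine to give $K\rtimes H\cong\Z_p^r\rtimes\Z_{p^r-1}$. The other branches collapse to short counting arguments once Macdonald's class equations and the identity $[g^2,h]=[g,h]^2$ are in hand.
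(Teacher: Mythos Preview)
The paper does not prove Theorem~\ref{thm:twoclass}; it is quoted from Cangelmi and Muktibodh \cite{Few_Conj} and used only as a black box (to obtain Corollary~\ref{cor:frobac} and in the proof of Theorem~\ref{cor:frobdim}). There is therefore no proof in the paper to compare your proposal against.

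For what it is worth, your argument appears sound. The sufficiency checks are routine, and for necessity you split correctly along the Dark--Scoppola trichotomy: class-$3$ $p$-groups and the $Q_8$-complement Frobenius case are eliminated by exhibiting at least three classes inside $G'$; class-$2$ $p$-groups reduce to extra-special $2$-groups via $G'=Z(G)$, $|G'|=2$, and the class-$2$ identity $[g^2,h]=[g,h]^2$; and in the cyclic-complement Frobenius case you obtain $G'=K$, rule out non-abelian $K$ via the proper normal subgroup $K'\lhd G$, and then read off $K\cong\Z_p^r$, $H\cong\Z_{p^r-1}$ from the regular action of $H$ on $K\setminus\{e\}$. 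This is essentially the expected line of reasoning for this result.
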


Generalizing a  Camina group we have for $H\leq G$, the pair $(G,H)$ is a \emph{Camina pair} if for all $g\in G\setminus H$, $g$ is conjugate to every element of $gH$. Note $(G,G')$ is a Camina pair if and only if $G$ is a Camina group. More information on Camina pairs can be found in \cite{Lewis1}.

Our interest in Camina pairs stems from the following result about equivalent conditions to $(G,H)$ being a Camina pair. This result was proven over the course of several papers \cite{Camina, Chillag1, Kuisch, Yongcai}. 

\begin{Lem}\label{lem:pair}
    Let $1<K\triangleleft G$. Then the following are equivalent:
    \begin{enumerate}
        \item $(G,K)$ is a Camina pair.
        \item If $x\in G\setminus K$, then $|C_G(x)|=|C_{G/K}(xK)|$.
        \item If $xK$ and $yK$ are conjugate and nontrivial in $G/K$, then $x$ is conjugate to $y$ in $G$.
        \item If $C_1=\{e\},C_2,\cdots , C_m$ are the conjugacy classes of $G$ contained in $K$ and $C_{m+1},\cdots, C_n$ are the conjugacy classes of $G$ outside $K$, then $C_iC_j=C_j$ for $1\leq i\leq m$ and $m+1\leq j\leq n$.
    \end{enumerate}
\end{Lem}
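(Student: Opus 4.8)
The plan is to route everything through the observation that condition (1) is, word for word, the statement that $xK\subseteq x^G$ for every $x\in G\setminus K$: saying that ``$x$ is conjugate to every element of $xK$'' means exactly that the coset $xK$ lies inside the conjugacy class $x^G$. I would then show that each of (2), (3) and (4) is equivalent to this reformulated (1); this avoids a long implication cycle, and the four equivalences are individually short.

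The equivalence (1)$\Leftrightarrow$(3) I would dispatch directly: if $xK,yK$ are conjugate and nontrivial in $G/K$ (so $x,y\notin K$), lift a conjugating element to $g\in G$, so that $gxg^{-1}\in yK$, and since (1) gives $yK\subseteq y^G$ we get $x\sim y$; conversely, for $x\in G\setminus K$ and $k\in K$ the cosets $xK$ and $(xk)K$ coincide and are nontrivial, so (3) forces $x\sim xk$, and letting $k$ vary yields (1). The equivalence (1)$\Leftrightarrow$(4) is nearly as direct: assuming (1), for a conjugacy class $C_i\subseteq K$ and a conjugacy class $C_j$ disjoint from $K$, normality of $K$ gives $cd=d(d^{-1}cd)\in dK\subseteq d^G=C_j$ for all $c\in C_i$, $d\in C_j$, so $C_iC_j\subseteq C_j$, while fixing $c\in C_i$ and writing an arbitrary $d'\in C_j$ as $c\cdot(c^{-1}d')$ with $c^{-1}d'\in d'K\subseteq C_j$ supplies the reverse inclusion; for the converse, from $C_iC_j=C_j$ applied with $C_i=k^G$ and $C_j=x^G$ one reads off $kx\sim x$, hence also $xk=(xkx^{-1})x\sim x$ since $xkx^{-1}\in K$, which is (1).

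The one step that requires genuine care is (1)$\Leftrightarrow$(2), and I expect it to be the main (mild) obstacle. Here the quotient map $\pi\colon G\to G/K$ restricts to a $G$-equivariant surjection $x^G\to(xK)^{G/K}$ between transitive $G$-sets under the conjugation actions, so all of its fibres have a single common size $s$; the fibre over the point $xK$ is $x^G\cap xK$, so $s\le|K|$, with $s=|K|$ precisely when $xK\subseteq x^G$. Counting orbit sizes gives $[G:C_G(x)]=|x^G|=s\,[G/K:C_{G/K}(xK)]$, which rearranges to $|C_G(x)|=|K|\,|C_{G/K}(xK)|/s$; hence (2) holds at a given $x\in G\setminus K$ if and only if $s=|K|$, i.e. if and only if $xK\subseteq x^G$, and quantifying over $x$ this is (1)$\Leftrightarrow$(2). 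The points needing attention are that $\pi|_{x^G}$ genuinely has equinumerous fibres and that the index computation collapses as stated; the rest, including the substitution $d'=c\cdot(c^{-1}d')$ used in (4), is bookkeeping.
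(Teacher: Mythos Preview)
Your proof is correct. Note, however, that the paper does not actually prove Lemma~\ref{lem:pair}: it is stated as a known result, with the remark that it ``was proven over the course of several papers \cite{Camina, Chillag1, Kuisch, Yongcai}.'' So there is no in-paper argument to compare against.

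That said, your proof is a clean, self-contained treatment. The reformulation of (1) as $xK\subseteq x^G$ is the natural pivot, and routing (2), (3), (4) each through this single condition is more economical than an implication cycle. The step (1)$\Leftrightarrow$(2) via the $G$-equivariant surjection $x^G\to (xK)^{G/K}$ and the common fibre size $s=|x^G\cap xK|$ is exactly the right way to see why equality of centralizer orders is equivalent to the full coset $xK$ lying in $x^G$; the equinumerosity of fibres is immediate from transitivity of the $G$-action on the base, so the ``main obstacle'' you flag is in fact minor. The arguments for (3) and (4) are unobjectionable; in (4) the reverse inclusion $C_j\subseteq C_iC_j$ via $d'=c\cdot(c^{-1}d')$ with $c^{-1}d'\in Kd'=d'K\subseteq C_j$ is perhaps the only place a reader might pause, and you have handled it correctly.
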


\section{Almost Commutative Terwilliger Algebras for Group Association Schemes}

For the remainder of this paper we let $T(G)$ to be the Terwilliger algebra for the group association scheme of the group $G$ with the base point being the identity of the group. Let $C_0=\{e\},C_1,C_2,\cdots, C_d$ be the conjugacy classes of $G$. We shall show the following:

\begin{Thm}\label{thm:acclassify}
        Let $G$ be a finite group. Then $T(G)$ is almost commutative if and only if $G$ is isomorphic to one of the following groups
        \begin{itemize}
        \item A finite abelian group;
        \item The group $(\Z_3)^2\rtimes Q_8$;
        \item $(\Z_p)^r\rtimes \Z_{p^{r-1}}$, for some prime $p$ and $r>0$;
        \item A non-abelian Camina $p-$group, for some prime $p$.
    \end{itemize}
    \end{Thm}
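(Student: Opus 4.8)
The plan is to translate the almost-commutativity condition into a purely group-theoretic statement about conjugacy-class products, and then to use Lemma~\ref{lem:pair} and the Dark--Scoppola classification to pin down the groups. Since the group association scheme is commutative, Theorem~\ref{thm:tanaka} applies; in particular condition (3) says $T(G)$ is almost commutative iff for all distinct $h,i$ there is exactly one $j$ with $p_{ij}^h\neq 0$. For the group association scheme the intersection number $p_{ij}^h$ is the coefficient of $\overline{C_h}$ in $\overline{C_i}\cdot\overline{C_j}$, so the condition becomes: for every ordered pair of distinct classes $(C_h,C_i)$, there is a unique class $C_j$ appearing in $\overline{C_i}\,\overline{C_j}$ that contains a term from $C_h$; equivalently, the ``class multiplication graph'' is as degenerate as possible. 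I would first record the easy direction: if $G$ is abelian every class is a singleton and the group scheme is the wreath-product-of-one-class situation covered by Theorem~\ref{thm:tanaka}(5), so $T(G)$ is almost commutative; and for the three non-abelian families one checks the intersection-number condition directly using the explicit class structure (Proposition~\ref{lem:pcamclass} for Camina $p$-groups, Lemma~\ref{lem:pair}(4) together with the known class structure of $\Z_p^r\rtimes\Z_{p^{r-1}}$ and of $\Z_3^2\rtimes Q_8$ for the Frobenius cases).

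For the hard direction, assume $T(G)$ is almost commutative and $G$ is non-abelian. The first goal is to show $G$ is a Camina group, i.e.\ $(G,G')$ is a Camina pair. I would apply the intersection-number condition to pairs $(C_h,C_i)$ with $C_i\subseteq Z(G)$ nontrivial if such exist, and more importantly analyse what happens when $C_i$ is a ``large'' class outside $G'$: taking $C_j=C_i^*$, the product $\overline{C_i}\,\overline{C_i^*}$ contains $\overline{C_0}=e$, and by the uniqueness clause $C_i^*$ must be the \emph{only} class whose product with $C_i$ hits the identity class $C_0=\{e\}$ — this is automatic — but the real leverage comes from choosing $h$ so that $C_h$ meets a coset $gG'$ and forcing, via condition (3), that the set of classes outside $G'$ is permuted very rigidly under multiplication by classes inside $G'$. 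Concretely, I expect to show that for any nontrivial $C_i\subseteq G'$ and any class $C_j\not\subseteq G'$ one must have $C_iC_j$ a single class, which by Lemma~\ref{lem:pair}(4) says exactly that $(G,G')$ is a Camina pair, hence $G$ is Camina. A parallel (or dual, via the Krein-parameter condition~(4)) argument handles the degenerate cases where $G'$ is small.

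Once $G$ is known to be a non-abelian Camina group, invoke Dark--Scoppola: $G$ is either a Frobenius group with cyclic complement, a Frobenius group with complement $Q_8$, or a $p$-group of nilpotency class $2$ or $3$. The $p$-group case lands directly in the fourth family of the theorem, so it remains to cut down the Frobenius cases. For cyclic complement, write $G = N\rtimes H$ with $N$ the Frobenius kernel and $H$ cyclic; the Camina condition forces $N = G'$ and the classes outside $G'$ to be the nontrivial cosets of $G'$, while the classes inside $N$ are the $H$-orbits on $N$. Feeding this into condition (3) and counting, I expect the constraint that for distinct $h,i$ there is a \emph{unique} $j$ with $p_{ij}^h\neq0$ to force $N$ to be elementary abelian and $H$ to act with a single nontrivial orbit on $N\setminus\{e\}$ — i.e.\ $|H| = |N|-1$ and $N\cong \Z_p^r$, $H\cong \Z_{p^r-1}$; this is precisely Theorem~\ref{thm:twoclass} (Cangelmi--Muktibodh) applied after one checks $G'=N$ is the union of exactly two classes, giving the third family. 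For complement $Q_8$, the same analysis of orbit sizes of $Q_8$ on the kernel, combined with the two-class forcing, should leave only $(\Z_3)^2\rtimes Q_8$, the second family; here I would compare with Theorem~\ref{thm:twoclass} again (note $Q_8$ is excluded from the cyclic-complement conclusion, so this genuinely is a separate small case). The main obstacle I anticipate is the core forcing step in the hard direction: extracting, from the innocuous-looking ``exactly one $j$'' condition on intersection numbers, both the Camina-pair conclusion and — in the Frobenius cases — the sharp statement that $G'$ (equivalently the kernel minus the identity) is a single conjugacy class; this requires a careful combinatorial bookkeeping of which products $\overline{C_i}\,\overline{C_j}$ overlap, and is where the bulk of the work will lie.
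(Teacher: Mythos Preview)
Your forward direction matches the paper's: verify Tanaka's condition (3) class-by-class for each of the four families, using Proposition~\ref{lem:pcamclass} for Camina $p$-groups and the explicit coset-class structure for the Frobenius examples.

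The reverse direction, however, is organized quite differently from the paper, and your route is substantially longer. The paper does \emph{not} first prove that $G$ is Camina and then invoke Dark--Scoppola and whittle down. Instead it extracts from Tanaka's condition~(3) a single clean group-theoretic consequence (Lemma~\ref{thm:acimplies}): whenever $x^G\neq(y^{-1})^G$ one has $x^Gy^G=(xy)^G$. This is a short argument playing the uniqueness of $j$ in $p_{i'j}^h\neq 0$ against the uniqueness in $p_{ij}^h\neq0$. The classification then follows immediately by citing Dade--Yadav (Theorem~\ref{thm:groupequiv}), who proved that the finite groups satisfying this class-product condition are exactly the four families listed. So the paper's entire reverse direction is one short lemma plus a citation.

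Your plan, by contrast, amounts to reproving the relevant portion of Dade--Yadav from scratch: first establish Camina, then run Dark--Scoppola, then in the Frobenius cases force $G'\setminus\{e\}$ to be a single class. This is doable in principle, but two points deserve caution. First, your proposed step ``for nontrivial $C_i\subseteq G'$ and $C_j\not\subseteq G'$, $C_iC_j$ is a single class'' does not by itself yield Lemma~\ref{lem:pair}(4), which requires $C_iC_j=C_j$; you would in effect need the full class-product condition of Lemma~\ref{thm:acimplies} anyway to push this through. Second, the narrowing of the Frobenius cases (cyclic complement down to $\Z_p^r\rtimes\Z_{p^r-1}$, and $Q_8$-complement down to the single group of order $72$) is exactly the nontrivial content of Dade--Yadav and is not a routine count; you correctly flag this as ``where the bulk of the work will lie,'' but the paper sidesteps it entirely by citing \cite{Dade}. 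What your approach would buy is self-containment; what the paper's approach buys is brevity and a transparent reduction to an already-classified property.
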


We begin by proving that for each of the groups in Theorem \ref{thm:acclassify}, $T(G)$ is an almost commutative Terwilliger algebra.

    \begin{Prop}\label{thm:abelac}
        Let $G$ be a finite abelian group. Then $T(G)$ is almost commutative.
    \end{Prop}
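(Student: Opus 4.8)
The plan is to prove the stronger fact that for a finite abelian group $G$ the Terwilliger algebra $T(G)$ is the \emph{full} matrix algebra $M_{|G|}(\C)$. Since $M_{|G|}(\C)$ is simple and its unique Wedderburn component has dimension $|G|^2=(d+1)^2$, it coincides with the primary component $V$; consequently, when $T(G)$ acts on $\C^{|G|}$ by left multiplication there is a single irreducible constituent, it has dimension $d+1$, and it is the primary module. Hence there are \emph{no} non-primary irreducible $T(G)$-modules, and the almost commutative condition is satisfied vacuously. (Alternatively, one can simply invoke condition~(5) of Theorem~\ref{thm:tanaka}, viewing the group scheme of $G$ as a one-term wreath product consisting of the abelian group scheme of $G$ itself; but the explicit description of $T(G)$ is what will be reused later when computing dimensions and idempotents.)

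First I would record the structure of the group association scheme of an abelian group. Since every conjugacy class is a singleton, write $C_i=\{g_i\}$ with $g_0=e$, so $d=|G|-1$; then $(A_i)_{xy}=1$ exactly when $yx^{-1}=g_i$, i.e. $A_i$ is the permutation matrix of multiplication by $g_i$. Using the defining rule for the dual idempotents at the base point $e$, $(E_i^*(e))_{yy}=1$ iff $y=g_i$, so each $E_i^*(e)$ is the diagonal matrix unit supported at $g_i$, and $\mathfrak{A}^*(e)$ is precisely the algebra of all diagonal matrices.

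The key computation is then to evaluate the triple products $E_i^*(e)A_jE_k^*(e)$: the $(x,y)$ entry is nonzero precisely when $x=g_i$, $y=g_k$, and $g_k=g_jg_i$. Thus for each ordered pair $(i,k)$ there is a \emph{unique} index $j$ (namely the one with $g_j=g_kg_i^{-1}$) for which $E_i^*(e)A_jE_k^*(e)\neq 0$, and for that $j$ the product is the matrix unit with a single $1$ in row $g_i$ and column $g_k$. Since all of these products lie in $T(G)$ and the matrix units span $M_{|G|}(\C)$, we get $M_{|G|}(\C)\subseteq T(G)\subseteq M_{|G|}(\C)$, hence equality, and the conclusion above follows. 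The argument is routine; the only points requiring care are the matrix-indexing convention in the triple-product computation and the dimension bookkeeping that identifies the single simple summand of $T(G)$ with the primary component $V$ (so that there genuinely are no non-primary modules to check). There is no substantive obstacle beyond this.
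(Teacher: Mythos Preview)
Your argument is correct, but it takes a different route from the paper. The paper's proof is a one-line verification of condition~(3) in Theorem~\ref{thm:tanaka}: since every class is a singleton, for distinct $C_i=\{x\}$ and $C_h=\{y\}$ the unique $j$ with $p_{ij}^h\neq 0$ is the one with $C_j=\{x^{-1}y\}$. You instead prove the stronger structural statement $T(G)=M_{|G|}(\C)$ by exhibiting every matrix unit as a product $E_i^*A_jE_k^*$, and then read off almost commutativity from the fact that the sole Wedderburn summand is the primary component. Both arguments are short; the paper's is marginally more direct for the stated goal, while yours yields the extra information $\dim T(G)=|G|^2$, which the paper later quotes from Bannai--Munemasa rather than deriving. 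Your parenthetical appeal to condition~(5) of Theorem~\ref{thm:tanaka} is also a valid one-line alternative.
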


    \begin{proof}
    Given any $C_i=\{x\}$ and $C_h=\{y\}$ with $x\neq y$ we have that, $C_j=\{x^{-1}y\}$ satisfies $\overline{C_i}\cdot\overline{C_j}=y=\overline{C_h}$. So only $p_{ij}^h\neq 0$ for this specific value of $j$. Hence, by Theorem \ref{thm:tanaka}(3), $T(G)$ is almost commutative. 
    \end{proof}

Next we consider $(\Z_3)^2\rtimes Q_8$. As this is a single group, we can directly compute its conjugacy classes to show the result.

   \begin{Prop}\label{thm: 72,41}
        Let $G=(\Z_3)^2\rtimes Q_8$. Then $T(G)$ is almost commutative.
    \end{Prop}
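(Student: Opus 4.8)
The plan is to verify that $G = (\Z_3)^2 \rtimes Q_8$ is a Camina group and then apply Tanaka's criterion via the structure of its conjugacy classes. First I would identify this group concretely: it has order $72$, and the action of $Q_8$ on $(\Z_3)^2$ is the faithful action of $Q_8 \leq \mathrm{SL}_2(3)$ on the natural module $\mathbb{F}_3^2$, which is fixed-point-free on nonzero vectors. Hence $G$ is a Frobenius group with kernel $(\Z_3)^2$ and complement $Q_8$, and in particular $G' = (\Z_3)^2$ (the complement being non-cyclic forces the derived subgroup to contain the kernel, and a Frobenius group with $Q_8$-complement has its kernel as the derived subgroup). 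Since every nonidentity element of the kernel is conjugate to every other nonzero vector scaled appropriately — more precisely, the $Q_8$-orbits on $(\Z_3)^2 \setminus \{0\}$ together with the translation structure make $G' \setminus \{e\}$ a single conjugacy class — and each coset of $G'$ outside $G'$ is a single conjugacy class by the Frobenius property (Lemma \ref{lem:pair} or directly), $G$ is a Camina group.

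Next I would enumerate the conjugacy classes explicitly. There is $C_0 = \{e\}$; there is one class of size $8$ consisting of $G' \setminus \{e\}$ (this uses that $Q_8$ acts transitively on the $8$ nonzero vectors of $\mathbb{F}_3^2$, which holds because the orbit has size dividing $8$ and is at least... in fact $Q_8$ acting as $\mathrm{Q}_8 \leq \mathrm{SL}_2(3)$ has orbits of size $8$ on the $8$ nonzero vectors — transitive); and the remaining $72 - 9 = 63$ elements lie in the $8$ nontrivial cosets of $G'$, each of which is a full conjugacy class of size $9$. So $d = 9$: classes $C_0, C_1$ (size $8$, inside $G'$), and $C_2, \dots, C_9$ (size $9$ each, the cosets of $G'$). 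To apply Theorem \ref{thm:tanaka}(3), I need: for all distinct $h, i$ there is exactly one $j$ with $p_{ij}^h \neq 0$, equivalently (in Schur-ring language) $\overline{C_i} \cdot \overline{C_j}$ contains $\overline{C_h}$ for a unique $j$. By Lemma \ref{lem:pair}(4), multiplying $\overline{C_1}$ (the interior class) by any coset-class $\overline{C_j}$ gives a multiple of $\overline{C_j}$ itself; and $\overline{C_i} \cdot \overline{C_j}$ for two coset-classes lands in $\FZ[G']$-type combinations, i.e., in $\Span\{\overline{C_0}, \overline{C_1}\}$ or in a single coset class depending on whether $C_i C_j \subseteq G'$ or not. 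I would check each of the cases $(h,i)$: when $h = 0$, $i$ arbitrary, the unique $j$ is $i^{-1} = i$'s inverse class; when $h = 1$ and $i \in \{0,1\}$ trivial/interior, handle directly; when $h$ is a coset-class, use Camina structure. The combinatorics here reduces to: $C_i C_j$ for coset-classes is either all of $G'$, or a single other coset, and one shows the relevant product never "splits" across two target classes.

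The cleanest route, and the one I would actually write, is to invoke Theorem \ref{thm:tanaka}(5) instead: since $G$ is a non-abelian Camina group that is Frobenius with complement $Q_8$, and (as I will establish in the later sections, or can cite) its group association scheme is a wreath product of a one-class scheme on $G/G'$-data with the group scheme of $G'$ — but this may be circular with later results, so the safer path is the direct intersection-number check of condition (3). The main obstacle is confirming the single class $C_1 = G' \setminus \{e\}$ of size $8$ behaves correctly in products $\overline{C_1} \cdot \overline{C_1}$: this product lies in $\Span\{\overline{C_0}, \overline{C_1}\}$, which is fine since these are the only classes inside $G'$, so for $h \in \{0,1\}$ and $i = 1$ the unique $j$ is $j = 1$; the point to verify is that $\overline{C_1}^2$ has nonzero coefficient on both $\overline{C_0}$ and $\overline{C_1}$, which is consistent with condition (3) because the witnessing $j$ (namely $j=1$) is the same for $h=0$ and $h=1$ — condition (3) asks for uniqueness in $j$ for each fixed $(h,i)$, not across $h$. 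I would close by noting this is a finite, mechanical verification (performed in Magma \cite{Magma}) and that it confirms $T(G)$ is almost commutative.
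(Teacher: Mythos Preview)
Your proposal contains a genuine error in identifying $G'$. You assert that $G' = (\Z_3)^2$, but this is false: since $G/(\Z_3)^2 \cong Q_8$ and $Q_8' = Z(Q_8) \cong \Z_2$, one has $G'/(\Z_3)^2 \cong \Z_2$, so $|G'| = 18$, not $9$. Concretely, $G' = (\Z_3)^2 \rtimes \langle c \rangle$ where $c$ is the central involution of $Q_8$. Consequently $[G:G'] = 4$, not $8$, and there are only three nontrivial cosets of $G'$, each of size $18$. Your claimed enumeration of ten conjugacy classes (one of size $1$, one of size $8$, eight of size $9$) is wrong; the group actually has six classes, of sizes $1,\ 9,\ 8,\ 18,\ 18,\ 18$. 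In particular, $G'$ is a union of three $G$-classes, so even after correcting $G'$ you cannot invoke Theorem~\ref{thm:almost2com}, which requires $G'$ to be a union of exactly two classes.

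The paper's proof avoids all of this by brute force: it writes down an explicit presentation, lists the six conjugacy classes, computes every product $\overline{C_i}\cdot\overline{C_j}$ for $i\le j$, and reads off directly that Tanaka's condition~(3) holds. Your instinct that $G$ is a Camina group is correct (the three classes outside $G'$ are precisely the nontrivial cosets of $G'$), but the structural shortcut you attempt does not go through; this group is listed separately in Theorem~\ref{thm:groupequiv} precisely because it does not fall under the other structural hypotheses. If you want to salvage a conceptual argument, you would need to analyse the products of the three classes inside $G'$ by hand and then use the Camina property (Lemma~\ref{lem:pair}(4)) for the rest --- but at that point you are essentially doing the paper's computation anyway.
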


    \begin{proof}
        A presentation of the group $G=(\Z_3)^2\rtimes Q_8=\langle x,y\rangle\rtimes \langle a,b,c\rangle$ is
        \[\langle a,b,c,x,y\colon a^2=b^2=c, c^2=e, x^3=y^3=e, b^a=bc, x^a=xy^2, x^b=y, x^c=x^2, y^a=x^2y^2, y^b=x^2,y^c=y^2\rangle.\]
        Through direct computation one can find that the six conjugacy classes of $G$ are
        \begin{small}\[C_0=\{e\}, C_1=\{cy^2, cx^2, cx, cx^2y, c, cxy, cx^2y^2, cy, cxy^2\}, C_2=\{xy,x^2y^2,y,xy^2,y^2,x^2,x,x^2y\},\]
        \[C_3=\{aby, abxy^2,abcx^2,ab,abxy,abx^2y^2,abcy^2,abx,abx^2y,abcy, abx^2, abcxy^2, abc, abcxy, abcx^2y^2, aby^2, abcx, abcx^2y\},\]
        \[C_4=\{acx^2,a, axy, ax^2y^2,ax, acy^2, ax^2y, acy, ax^2, acxy^2, ac, acxy, acx^2y^2, ay^2, acx, acx^2y, ay, axy^2\},\]
        \[C_5=\{bxy^2, bcx^2, b, bxy, bx^2y^2, bcy^2, bx, bx^2y, bcy, bx^2, bcxy^2, bc, bcxy, bcx^2y^2, by^2, bcx, bcx^2y, by\}.\]\end{small}
        We shall now compute the products $\overline{C_i}\cdot\overline{C_j}$. Note that $\overline{C_i}\cdot\overline{C_j}=\overline{C_j}\cdot\overline{C_i}$, so it will suffice to compute the products for $i\leq j$. Doing so we find that $\overline{C_0}\cdot \overline{C_j}=\overline{C_j}$ for $0\leq j\leq 5$ and that
        \[\overline{C_1}\cdot\overline{C_1}=9\overline{C_0}+9\overline{C_2},\  \overline{C_1}\cdot\overline{C_2}=8\overline{C_1},\  \overline{C_1}\cdot\overline{C_3}=9\overline{C_3}, \ \overline{C_1}\cdot\overline{C_4}=9\overline{C_4},\  \overline{C_1}\cdot\overline{C_5}=9\overline{C_5},\  \overline{C_2}\cdot\overline{C_2}=8\overline{C_0}+7\overline{C_2},\]  \[\overline{C_2}\cdot\overline{C_3}=8\overline{C_3},\  \overline{C_2}\cdot\overline{C_4}=8\overline{C_4},\  \overline{C_2}\cdot\overline{C_5}=8\overline{C_5},\  \overline{C_3}\cdot\overline{C_3}=18\overline{C_0}+18\overline{C_1}+18\overline{C_2},\  \overline{C_3}\cdot\overline{C_4}=18\overline{C_5},\  \overline{C_3}\cdot\overline{C_5}=18\overline{C_4},\]  \[\overline{C_4}\cdot\overline{C_4}=18\overline{C_0}+18\overline{C_1}+18\overline{C_2},\  \overline{C_4}\cdot\overline{C_5}=18\overline{C_3},\  \overline{C_5}\cdot\overline{C_5}=18\overline{C_0}+18\overline{C_1}+18\overline{C_2}.\]
        Looking at these products we see that given $C_i$ and $C_h$ with $C_i\neq C_h$ there exists a unique $C_j$ such that $p_{ij}^h\neq 0$. Therefore, $T(G)$ is almost commutative by Theorem \ref{thm:tanaka}(3).
    \end{proof}

    \begin{Thm}\label{thm:almost2com}
        Let $G$ be a Camina group in which $G'$ is the union of two conjugacy classes.  Then $T(G)$ is almost commutative.
    \end{Thm}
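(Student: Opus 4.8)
The plan is to verify condition (3) of Tanaka's theorem (Theorem~\ref{thm:tanaka}) directly: for every pair of distinct conjugacy classes $C_i\neq C_h$ there is exactly one class $C_j$ with $p_{ij}^h\neq 0$. I would first translate this into the language of conjugacy-class products. The intersection number $p_{ij}^h$ equals the structure constant $\lambda_{ijh}$, which counts the factorizations of a fixed element of $C_h$ as a product of an element of $C_i$ and an element of $C_j$; this count is constant on $C_h$, and since $C_iC_j$ is conjugation-invariant (hence a union of conjugacy classes) we have $p_{ij}^h\neq 0$ if and only if $C_h\subseteq C_iC_j$. Existence of at least one such $C_j$ is immediate: for $g\in C_h$ and any $a\in C_i$, the element $a^{-1}g$ lies in some class $C_j$ and $g=a(a^{-1}g)\in C_iC_j$. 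So the entire content of the statement is uniqueness.

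Next I would pin down the conjugacy classes of $G$. Because $G$ is Camina, every class outside $G'$ is a coset of $G'$; because $G'$ is the union of two classes, the classes of $G$ are precisely $C_0=\{e\}$, $C_1=G'\setminus\{e\}$, and the $[G:G']-1$ nontrivial cosets of $G'$, each a single class. Therefore, under the projection $\pi\colon G\to Q:=G/G'$, the image $\pi(C)$ of every class $C$ is a single element of the abelian group $Q$; write $q_C$ for it. Then $C_h\subseteq C_iC_j$ forces $q_{C_h}=q_{C_i}q_{C_j}$, so $q_{C_j}=q_{C_i}^{-1}q_{C_h}$ is determined in $Q$. If $q_{C_i}\neq q_{C_h}$, this value is nontrivial and determines $C_j$ as the unique coset $\pi^{-1}(q_{C_j})$, giving uniqueness. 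If $q_{C_i}=q_{C_h}$, then either the common value is nontrivial, whence $C_i=C_h$ (both equal that coset), contradicting our assumption, or it is trivial, so $C_i,C_h\in\{C_0,C_1\}$ and only the ordered pairs $(C_0,C_1)$ and $(C_1,C_0)$ remain.

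These last two pairs I would settle by a direct computation. For $(C_0,C_1)$ we have $\overline{C_0}\cdot\overline{C_j}=\overline{C_j}$, forcing $C_j=C_1$. For $(C_1,C_0)$ the constraint $q_{C_j}=e$ restricts $C_j$ to $\{C_0,C_1\}$, and since $C_1C_0=C_1$ does not contain $e$ whereas $e\in C_1C_1$ (because $C_1^*=C_1$), we must have $C_j=C_1$. In every case $C_j$ is unique, so Theorem~\ref{thm:tanaka}(3) holds and $T(G)$ is almost commutative.

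The argument is short and I do not anticipate a genuine obstacle; the point requiring the most care is the handling of the pairs with $C_i,C_h\subseteq G'$, and this is exactly where the hypothesis that $G'$ is the union of only two classes is used: without it $G'\setminus\{e\}$ would break into several classes and the bookkeeping inside $G'$ would no longer be finite. (By Theorem~\ref{thm:twoclass}, $G$ is an extra-special $2$-group or a Frobenius group with kernel $\Z_p^r$ and complement $\Z_{p^r-1}$; one could instead treat these two families separately and tabulate class products, but the projection argument above makes that unnecessary.)
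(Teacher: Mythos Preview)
Your proof is correct and follows essentially the same approach as the paper: verify Tanaka's condition~(3) by exploiting that every class outside $G'$ is a $G'$-coset and that only $\{e\}$ and $G'\setminus\{e\}$ lie inside $G'$. The paper organizes the argument as a case split on whether $C_i$ and $C_h$ lie in $G'$ or not (invoking Lemma~\ref{lem:pair} for the mixed cases), whereas you package the same information via the projection $\pi:G\to G/G'$ and read off $q_{C_j}$ directly; this is a cleaner bookkeeping of the identical idea, not a different route.
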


    \begin{proof}
        Here $G'=C_0\cup C_1$ with $C_1=G'\setminus \{e\}$. We show given distinct $i$ and $h$ there exists a unique $j$ such that $p_{ij}^h\neq 0$. We do so by splitting into cases based on if $C_i\subseteq G'$ or not.

        {\bf Case 1: } $C_i\subseteq G'$. If $C_i=\{e\}$, then $\overline{C_i}\cdot\overline{C_j}=\overline{C_j}$. In this case $p_{ij}^h\neq 0$ if and only if $C_j=C_h$. Hence, there is a unique $j$ such that $p_{ij}^h\neq 0$ in this case. If $C_i=G'\setminus \{e\}$ we consider two subcases.\\
        {\bf Subcase 1.1: }$C_h\subseteq G'$. Then as $C_h\neq C_i=G'\setminus \{e\}$ we have $C_h=\{e\}$. For any $C_j$ not in $G'$ we have by Lemma \ref{lem:pair} that $C_iC_j=C_j$. So $\overline{C_i}\cdot \overline{C_j}=|C_i|\overline{C_j}$. Then in this case as $C_j\neq C_h$ we have $p_{ij}^h=0$. Thus for $p_{ij}^h\neq 0$ either $C_j=\{e\}$ or $C_j=G'\setminus \{e\}$. Since $\overline{C_i}\cdot e=\overline{C_i}\neq \overline{C_h}$ we have $p_{ij}^h=0$ in this case. The only class we have left to consider is $C_j=G'\setminus \{e\}$. In this case $\overline{C_i}\cdot\overline{C_j}=(\overline{G'}-e)(\overline{G'}-e)=|G'|\overline{G'}-2\overline{G'}+e=(|G'|-2)\overline{G'}+e$. Thus, $p_{ij}^h\neq 0$, and there is exactly one $j$ such that $p_{ij}^h\neq 0$.\\
        {\bf Subcase 1.2: }$C_h\subseteq G\setminus G'$. Then for any $C_j\subseteq G'$ we have $C_iC_j\subseteq G'$ as $C_i,C_j\subseteq G'$. Then $p_{ij}^h=0$ in this case. For any $C_j\subseteq G\setminus G'$ we have by Lemma \ref{lem:pair} that $C_iC_j=C_j$. Then for $p_{ij}^h\neq 0$ we must have $j=h$. That is, $p_{ij}^h\neq 0$ if and only if $j=h$ in this case.

        {\bf Case 2: }$C_i\subseteq G\setminus G'$. Let $g\in C_i$. Then $C_i=gG'$. Now consider two subcases.\\
        {\bf Subcase 2.1: }$C_h\subseteq G'$. If $C_j\subseteq G'$, then $C_iC_j=(gG')C_j\subseteq gG'$. As $gG'$ is disjoint from $G'$, $p_{ij}^h=0$ for all such $j$. Now take $C_j\subseteq G\setminus G'$. So $C_j=kG'$ for some $k\in G\setminus G'$. Then $C_iC_j=(gG')(kG')=(gk)G'$. This means that all elements of $\overline{C_i}\cdot\overline{C_j}$ are in $(gk)G'$. As $C_h\subseteq G'$ we have $p_{ij}^h\neq 0$ if and only if $gkG'=G'$. In this case $gk\in G'$, so $kG'=g^{-1}G'$. As inverses in $G/G'$ are unique there is a unique $C_j$ such that $C_iC_j=G'$ and $p_{ij}^h\neq 0$.\\
        {\bf Subcase 2.2: }$C_h\subseteq G\setminus G'$. Then $C_h=xG'$ for $x\in G\setminus G'$, with $gG'\neq xG'$. If $C_j\subseteq G'$ then $C_iC_j\subseteq gG'$ and $p_{ij}^h=0$. Now if $C_j$ is outside of $G'$, $C_j=kG'$ and $C_iC_j=gkG'$. Hence, all the terms in $\overline{C_i}\cdot\overline{C_j}$ are in $gkG'$. Then $p_{ij}^h\neq 0$ if and only if $gkG'=xG'$ if and only if $C_j=kG'=g^{-1}xG'$. Therefore, there is a unique $j$ such that $p_{ij}^h\neq 0$.

        Hence, $T(G)$ is almost commutative by Theorem \ref{thm:tanaka}(3).
    \end{proof}

    With this result and Theorem \ref{thm:twoclass} we immediately have the following result.

\begin{Cor}\label{cor:frobac}
        Let $G$ be the Frobenius group $Z_{p}^r\rtimes Z_{p^r-1}$ for some prime $p$ and $r>0$. Then $T(G)$ is almost commutative. \qed
    \end{Cor}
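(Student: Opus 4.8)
The plan is to derive Corollary~\ref{cor:frobac} as an immediate consequence of Theorem~\ref{thm:almost2com} together with Theorem~\ref{thm:twoclass}, so the ``proof'' is really just a matter of verifying that the hypotheses line up. First I would invoke Theorem~\ref{thm:twoclass} in the easy direction: the Frobenius group $G=\Z_p^r\rtimes \Z_{p^r-1}$ appears explicitly on the right-hand side of that biconditional (with Frobenius kernel $\Z_p^r$ and Frobenius complement $\Z_{p^r-1}$), so Theorem~\ref{thm:twoclass} tells us that $G$ is a Camina group and that $G'$ is the union of exactly two conjugacy classes. (One may also note directly that for a Frobenius group the kernel $\Z_p^r$ is precisely $G'$, and that the complement $\Z_{p^r-1}$ acts on it with a single nontrivial orbit together with the fixed point $e$, which is exactly the ``union of two conjugacy classes'' condition; but citing Theorem~\ref{thm:twoclass} suffices.)

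With those two facts in hand, the group $G$ satisfies the hypotheses of Theorem~\ref{thm:almost2com} verbatim: it is a Camina group whose derived subgroup is a union of two conjugacy classes. Applying that theorem gives that $T(G)$ is almost commutative, which is the assertion of the corollary. So the entire argument is a two-line deduction, and there is essentially no obstacle — the only thing to be careful about is making sure the labels of the kernel and complement in Theorem~\ref{thm:twoclass} match those in the corollary statement (they do, after the harmless identification $Z_p^r = \Z_p^r$).

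Concretely, I would write:

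\begin{proof}
By Theorem~\ref{thm:twoclass}, since $G=\Z_p^r\rtimes \Z_{p^r-1}$ is a Frobenius group with Frobenius kernel $\Z_p^r$ and Frobenius complement $\Z_{p^r-1}$, it follows that $G$ is a Camina group and that $G'$ is the union of two conjugacy classes. Hence $G$ satisfies the hypotheses of Theorem~\ref{thm:almost2com}, and therefore $T(G)$ is almost commutative.
\end{proof}

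Since the excerpt already presents this corollary with a \qed and no written proof (the intent being ``immediate''), an alternative is to leave it as stated; but the short deduction above is worth spelling out for completeness, and it introduces no new ideas beyond the two cited results.
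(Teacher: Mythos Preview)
Your proposal is correct and matches the paper's approach exactly: the paper states this corollary immediately after Theorem~\ref{thm:almost2com} with the remark ``With this result and Theorem~\ref{thm:twoclass} we immediately have the following result,'' and gives no further proof. Your two-line deduction via Theorems~\ref{thm:twoclass} and~\ref{thm:almost2com} is precisely what is intended.
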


     We now turn our attention to Camina $p-$groups.

    \begin{Thm}\label{thm:Caminapac}
        Let $G$ be a Camina $p-$group. Then $T(G)$ is almost commutative.
    \end{Thm}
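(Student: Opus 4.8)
The plan is to verify condition (3) of Theorem~\ref{thm:tanaka}: for all distinct conjugacy classes $C_i$ and $C_h$, there is exactly one $C_j$ with $p_{ij}^h \neq 0$. The key structural tool is Macdonald's description of the conjugacy classes of a Camina $p$-group (Proposition~\ref{lem:pcamclass}), which tells us that every non-central class outside $\gamma_2(G)$ is a coset of $\gamma_2(G)$, every class in $\gamma_2(G)\setminus\gamma_3(G)$ is a coset of $\gamma_3(G)=Z(G)$, and every element of $Z(G)$ is its own class. Since $G$ is a $p$-group of nilpotency class $2$ or $3$ (by the Dark--Scoppola classification, the Frobenius cases do not arise for $p$-groups), these three families exhaust all conjugacy classes. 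I would also invoke Lemma~\ref{lem:pair} applied to the Camina pair $(G,\gamma_2(G))$: for a class $C_i \subseteq \gamma_2(G)$ and a class $C_j \not\subseteq \gamma_2(G)$, we have $C_i C_j = C_j$, hence $\overline{C_i}\cdot\overline{C_j} = |C_i|\,\overline{C_j}$. In the class-$3$ case I would additionally use that $(G,Z(G))$ is also a Camina pair (which follows because $G/Z(G)$ is a Camina group by Proposition~\ref{prop:cammod}, and $(G,Z(G))$ being a Camina pair is equivalent, via Lemma~\ref{lem:pair}(3), to that conjugacy-transfer property), so $C_i C_j = C_j$ whenever $C_i \subseteq Z(G)$ and $C_j \not\subseteq Z(G)$.

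The proof then splits into cases by the location of $C_i$ and $C_h$ among the three families, much as in the proof of Theorem~\ref{thm:almost2com}. If $C_i = \{e\}$ the claim is trivial. If $C_i \subseteq Z(G)$ is a nontrivial central class, then $\overline{C_i}\cdot\overline{C_j}$ is a single group element times $\overline{C_j}$, so it equals $\overline{C_h}$ for exactly the class $C_j$ obtained by translating $C_h$ by that central element (and this translate really is a conjugacy class since multiplication by a central element permutes the classes). If $C_i \subseteq \gamma_2(G)\setminus Z(G)$, then $C_i = x Z(G)$ for some $x$; against a class $C_j$ outside $\gamma_2(G)$ we get $\overline{C_i}\cdot\overline{C_j} = |Z(G)|\,\overline{C_j}$, which is $\overline{C_h}$ only if $C_j = C_h$ and $C_h$ lies outside $\gamma_2(G)$; against a class $C_j \subseteq \gamma_2(G)$ the product lands in the coset $x\gamma_2(G)$-worth of cosets of $Z(G)$ inside $\gamma_2(G)$, and one computes (using that $\gamma_2(G)/Z(G)$ is elementary abelian in the class-$3$ case, so inverses of cosets are unique) that exactly one $C_j \subseteq \gamma_2(G)$ contributes. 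Finally if $C_i \not\subseteq \gamma_2(G)$, then $C_i = g\gamma_2(G)$, and for any class $C_j$ the product $\overline{C_i}\cdot\overline{C_j}$ is supported on the single coset $g\gamma_2(G)\cdot(\text{coset of }C_j)$ of $\gamma_2(G)$; uniqueness of inverses in the (abelian) quotient $G/\gamma_2(G)$ forces exactly one $C_j$ whose coset multiplies $g\gamma_2(G)$ into the coset containing $C_h$, and one checks that this $C_j$ indeed has $p_{ij}^h\neq 0$ by exhibiting an element of $C_h$ in the support.

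The main obstacle I anticipate is the bookkeeping in the subcase $C_i, C_h \subseteq \gamma_2(G)$ (both non-central), since there the relevant "quotient" is $\gamma_2(G)/Z(G)$ and one must be careful that (i) the product $\overline{C_i}\cdot\overline{C_j}$ for $C_j \subseteq \gamma_2(G)\setminus Z(G)$ is constant on, and supported on, a single coset of $Z(G)$, and (ii) no class $C_j \subseteq Z(G)$ sneaks in an extra contribution — this is where $(G,Z(G))$ being a Camina pair is essential, giving $C_i C_j = C_i$ (a coset disjoint from $C_h$ unless $C_i = C_h$, excluded). A secondary subtlety is confirming that in the class-$2$ case the statement degenerates correctly (then $\gamma_3(G) = \{e\}$ and only two families of classes occur), but this is subsumed by the general argument. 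Once the coset-level computations are organized, each case yields existence-and-uniqueness of $j$, and Theorem~\ref{thm:tanaka}(3) gives the conclusion.
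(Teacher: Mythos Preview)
Your proposal is correct and follows essentially the same strategy as the paper: verify Tanaka's condition~(3) by a case analysis on the position of $C_i$ and $C_h$ relative to the chain $\gamma_3(G)\leq\gamma_2(G)\leq G$, using Macdonald's description of the classes (Proposition~\ref{lem:pcamclass}); the paper simply writes out all nine cases by direct coset computation rather than invoking Lemma~\ref{lem:pair}. One small caution: your deduction that $(G,Z(G))$ is a Camina pair from ``$G/Z(G)$ is Camina via Proposition~\ref{prop:cammod}'' and Lemma~\ref{lem:pair}(3) does not go through as stated, since Lemma~\ref{lem:pair}(3) concerns \emph{lifting} conjugacy from $G/K$ to $G$, which is not what ``$G/K$ is Camina'' asserts; however, the claim is immediate from Proposition~\ref{lem:pcamclass} itself (every non-central class contains its $Z(G)$-coset), so the argument is easily repaired and is in any case not needed once you compute $C_jC_i=C_i$ directly for a central singleton $C_j$.
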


    \begin{proof}
        Let $\gamma_2(G)=[G,G]$ and $\gamma_3(G)=[\gamma_2(G),G]$. Let $C_i=x^G$ and $C_h=y^G$ with $i\neq h$. We consider nine cases coming from Proposition \ref{lem:pcamclass}.
        
        {\bf Case 1: }$x,y\in \gamma_3(G)$. Then by Proposition \ref{lem:pcamclass}, $x^G=\{x\}$, $y^G=\{y\}$, and $x^{-1}y\in \gamma_3(G)$. Thus, by Proposition \ref{lem:pcamclass}, $(x^{-1}y)^G=\{x^{-1}y\}$. Set $C_j=(x^{-1}y)^G$. Then $C_iC_j=\{y\}=C_h$. So $p_{ij}^h\neq 0$. Suppose $C_k=z^G$ is another conjugacy classes of $G$ such that $p_{ik}^h\neq 0$. Then $y\in C_iC_k$. So there exists some $gzg^{-1}\in z^G$ such that $y=xgzg^{-1}$. Then $x^{-1}y=gzg^{-1}\in C_j$. So $C_k=C_j$ and $C_j$ is the unique conjugacy class such that $p_{ij}^h\neq 0$ in this case.
        
        {\bf Case 2: }$x\in \gamma_2(G)\setminus \gamma_3(G)$ and $y\in \gamma_3(G)$. Then by Proposition \ref{lem:pcamclass}, $C_i=x\gamma_3(G)$ and $C_h=\{y\}$. As $x\in \gamma_2(G)\setminus \gamma_3(G)$, we have $x^{-1}\in \gamma_2(G)\setminus \gamma_3(G)$ as well. Then by Proposition \ref{lem:pcamclass}, we have $(x^{-1})^G=x^{-1}\gamma_3(G)$. Set $C_j=(x^{-1})^G$. We have $C_iC_j=x\gamma_3(G)x^{-1}\gamma_3(G)=\gamma_3(G)$. As $y\in \gamma_3(G)$, $p_{ij}^h\neq 0$. Suppose $C_k=z^G$ with $p_{ik}^h\neq 0$. Then $y\in C_iC_k$. If $z\in \gamma_3(G)$, then by Proposition \ref{lem:pcamclass}, $z^G=\{z\}$. So we have $C_iC_k=x\gamma_3(G)z=x\gamma_3(G)$ since $z\in \gamma_3(G)$. Note $y\not\in x\gamma_3(G)$, so we have a contradiction. Thus we must have $z\not\in \gamma_3(G)$. If $z\in G\setminus \gamma_2(G)$, then by Proposition \ref{lem:pcamclass} $z^G=z\gamma_2(G)$. Then $C_iC_k=x\gamma_3(G)z\gamma_2(G)=z\gamma_2(G)$ as $x\gamma_3(G)\subseteq \gamma_2(G)$. Since $y\in \gamma_3(G)\subseteq \gamma_2(G)$, we have that $y\not\in z\gamma_2(G)$. We then have a contradiction. Hence, $z\not\in G\setminus \gamma_2(G)$. Therefore $z\in \gamma_2(G)\setminus \gamma_3(G)$. Then by Proposition \ref{lem:pcamclass}, $z^G=z\gamma_3(G)$. Then $C_iC_k=xz\gamma_3(G)$. We have $y\in xz\gamma_3(G)$ if and only if $xz\gamma_3(G)=\gamma_3(G)$. Then $xz\in \gamma_3(G)$. So $z\in x^{-1}\gamma_3(G)$ and $z^G=x^{-1}\gamma_3(G)=C_j$. Therefore, $C_k=C_j$. Hence, $C_j$ is the unique class of $G$ such that $p_{ij}^h\neq 0$.

        {\bf Case 3: }$x\in G\setminus \gamma_2(G)$ and $y\in \gamma_3(G)$. By Proposition \ref{lem:pcamclass}, $C_i=x\gamma_2(G)$ and $C_h=\{y\}$. As $x\in G\setminus \gamma_2(G)$, we have $x^{-1}\in G\setminus \gamma_2(G)$ as well. Then by Proposition \ref{lem:pcamclass}, we have $(x^{-1})^G=x^{-1}\gamma_2(G)$. Set $C_j=(x^{-1})^G$. We have $C_iC_j=x\gamma_2(G)x^{-1}\gamma_2(G)=\gamma_2(G)$. As $y\in \gamma_3(G)\subseteq \gamma_2(G)$, we have that $p_{ij}^h\neq 0$. Suppose $C_k=z^G$ such that $p_{ik}^h\neq 0$. Then $y\in C_iC_k$. If $z\in \gamma_3(G)$, then by Proposition \ref{lem:pcamclass}, $z^G=\{z\}$. So $C_iC_k=x\gamma_2(G)z=x\gamma_2(G)$ since $z\in \gamma_3(G)\subseteq \gamma_2(G)$. Note as $y\in\gamma_3(G)$, that $y\not\in x\gamma_2(G)$. We then have a contradiction, so $z\not\in \gamma_3(G)$. If $z\in \gamma_2(G)\setminus \gamma_3(G)$, then by Proposition \ref{lem:pcamclass}, $z^G=z\gamma_3(G)$. Then $C_iC_k=x\gamma_2(G)z\gamma_3(G)=x\gamma_2(G)$ as $z\gamma_3(G)\subseteq \gamma_2(G)$. Since $y\in \gamma_3(G)\subseteq \gamma_2(G)$, we have $y\not\in z\gamma_2(G)$. We then have a contradiction, so $z\not\in \gamma_2(G)\setminus \gamma_3(G)$. Thus, $z\in G\setminus \gamma_2(G)$. Then by Proposition \ref{lem:pcamclass}, $z^G=z\gamma_2(G)$. Then $C_iC_k=xz\gamma_2(G)$. We have $y\in xz\gamma_2(G)$ if and only if $xz\gamma_2(G)=\gamma_2(G)$ as $y\in \gamma_3(G)\subseteq \gamma_2(G)$. Then $xz\in \gamma_2(G)$. So $z\in x^{-1}\gamma_2(G)$. Then $z^G=x^{-1}\gamma_2(G)=C_j$, so $C_k=C_j$. Hence, $C_j$ is the unique class of $G$ such that $p_{ij}^h\neq 0$.

        {\bf Case 4: }$x\in \gamma_3(G)$ and $y\in \gamma_2(G)\setminus \gamma_3(G)$. Then by Proposition \ref{lem:pcamclass}, $C_i=\{x\}$ and $C_h=y\gamma_3(G)$. Take $C_j=y\gamma_3(G)$. Then $C_iC_j=x\cdot y\gamma_3(G)=y\gamma_3(G)$ as $x\in \gamma_3(G)$. So $p_{ij}^h\neq 0$. Suppose $C_k=z^G$ such that $p_{ik}^h\neq 0$. Then $y\gamma_3(G)\subseteq C_iC_k$. If $z\in \gamma_3(G)$, then by Proposition \ref{lem:pcamclass}, $z^G=\{z\}$. Then $C_iC_k=xz\in \gamma_3(G)$. As $y\gamma_3(G)\not\subseteq \gamma_3(G)$, we have a contradiction. So $z\not\in \gamma_3(G)$. If $z\in G\setminus \gamma_2(G)$, then by Proposition \ref{lem:pcamclass}, $z^G=z\gamma_2(G)$. Then $C_iC_k=x\cdot z\gamma_2(G)=z\gamma_2(G)$ as $x\in \gamma_3(G)\subseteq \gamma_2(G)$. We have $y\gamma_3(G)\subseteq \gamma_2(G)$, so $y\gamma_3(G)\not\subseteq z\gamma_2(G)$ and we have a contradiction. Then $z\not\in G\setminus \gamma_2(G)$. We then must have $z\in \gamma_2(G)\setminus \gamma_3(G)$. Then by Proposition \ref{lem:pcamclass}, $z^G=z\gamma_3(G)$ and $C_iC_k=x\cdot z\gamma_3(G)=z\gamma_3(G)$. This contains $y\gamma_3(G)$ if and only if $y\gamma_3(G)=z\gamma_3(G)$. So $C_k=C_j$. Therefore, $C_j$ is the unique class of $G$ such that $p_{ij}^h\neq 0$.

        {\bf Case 5: } $x,y\in \gamma_2(G)\setminus \gamma_3(G)$. Then by Proposition \ref{lem:pcamclass}, $C_i=x\gamma_3(G)$ and $C_h=y\gamma_3(G)$ with $x\neq y$ since $C_i\neq C_h$. Pick $C_j=(x^{-1}y)^G$. Note $x^{-1}y\in \gamma_2(G)$ as $x,y\in \gamma_2(G)\setminus \gamma_3(G)$. If $x^{-1}y\in \gamma_3(G)$, then $y\in x\gamma_3(G)$ which contradicts $C_i\neq C_h$. Thus, $x^{-1}y\in \gamma_2(G)\setminus \gamma_3(G)$. Then by Proposition \ref{lem:pcamclass}, $C_j=x^{-1}y\gamma_3(G)$. So we have $C_iC_j=x\gamma_3(G)(x^{-1}y\gamma_3(G))=y\gamma_3(G)$. So $p_{ij}^h\neq 0$. Suppose $C_k=z^G$ such that $p_{ik}^h\neq 0$. Then $y\gamma_3(G)\subseteq C_iC_k$. If $z\in \gamma_3(G)$, then $z^G=\{z\}$ by Proposition \ref{lem:pcamclass}. Then $C_iC_k=x\gamma_3(G)\cdot z=x\gamma_3(G)$ as $z\in \gamma_3(G)$. This does not contain $y\gamma_3(G)$ as $C_i\neq C_h$, so we have a contradiction. This implies $z\not\in \gamma_3(G)$. If $z\in G\setminus \gamma_2(G)$, then $z^G=z\gamma_2(G)$ by Proposition \ref{lem:pcamclass}. Then $C_iC_k=x\gamma_3(G)z\gamma_2(G)=z\gamma_2(G)$ as $x\gamma_3(G)\subseteq \gamma_2(G)$. As $y\gamma_3(G)\subseteq \gamma_2(G)$, $y\gamma_3(G)\not\subseteq z\gamma_2(G)$. This gives contradiction, so $z\not\in G\setminus \gamma_2(G)$. Then $z\in \gamma_2(G)\setminus \gamma_3(G)$, so by Proposition \ref{lem:pcamclass}, $z^G=z\gamma_3(G)$. Then $C_iC_k=xz\gamma_3(G)$. This contains $y\gamma_3(G)$ if and only if $xz\gamma_3(G)=y\gamma_3(G)$. Then $z\gamma_3(G)=x^{-1}y\gamma_3(G)=C_j$. So $C_k=C_j$. Therefore, $C_j$ is the unique class of $G$ such that $p_{ij}^h\neq 0$. 

        {\bf Case 6: }$x\in G\setminus \gamma_2(G)$ and $y\in \gamma_2(G)\setminus \gamma_3(G)$. Then by Proposition \ref{lem:pcamclass}, $C_i=x\gamma_2(G)$ and $C_h=y\gamma_3(G)$. Take $C_j=(x^{-1})^G$. As $x\in G\setminus \gamma_2(G)$ we have $x^{-1}\in G\setminus \gamma_2(G)$. Then by Proposition \ref{lem:pcamclass}, $C_j=x^{-1}\gamma_2(G)$. Thus $C_iC_j=xx^{-1}\gamma_2(G)=\gamma_2(G)$. As $y\gamma_3(G)\subseteq \gamma_2(G)$, $p_{ij}^h\neq 0$. Suppose $C_k=z^G$ such that $p_{ik}^h\neq 0$. Then $y\gamma_2(G)\subseteq C_iC_k$. If $z\in \gamma_3(G)$, then by Proposition \ref{lem:pcamclass}, $z^G=\{z\}$. Then $C_iC_k=x\gamma_2(G)\cdot z=x\gamma_2(G)$ as $z\in \gamma_3(G)\subseteq \gamma_2(G)$. As $y\gamma_3(G)\not\subseteq x\gamma_2(G)$ since $y\gamma_3(G)\subseteq \gamma_2(G)$ we have a contradiction. So $z\not\in \gamma_3(G)$. If $z\in \gamma_2(G)\setminus \gamma_3(G)$, then by Proposition \ref{lem:pcamclass}, we have $z^G=z\gamma_3(G)$. Then $C_iC_k=x\gamma_2(G)\cdot z\gamma_3(G)=x\gamma_2(G)$ as $z\gamma_3(G)\subseteq \gamma_2(G)$. As $y\gamma_3(G)\not\subseteq x\gamma_2(G)$ since $y\gamma_3(G)\subseteq \gamma_2(G)$ we have $z\not\in \gamma_2(G)\setminus \gamma_3(G)$. Then $z\in G\setminus \gamma_2(G)$. Then by Proposition \ref{lem:pcamclass}, $z^G=z\gamma_2(G)$. We then have $C_iC_k=xz\gamma_2(G)$. This contains $y\gamma_3(G)$ if and only if $xz\gamma_2(G)=\gamma_2(G)$. Then $z\gamma_2(G)=x^{-1}\gamma_2(G)$. So $C_k=C_j$. Therefore, $C_j$ is the unique class of $G$ such that $p_{ij}^h\neq 0$.

        {\bf Case 7: }$x\in \gamma_3(G)$ and $y\in G\setminus \gamma_2(G)$. The proof is similar to Case 4.

        {\bf Case 8: }$x\in \gamma_2(G)\setminus \gamma_3(G)$ and $y\in G\setminus \gamma_2(G)$. The proof is similar to Case 5.

        {\bf Case 9: }$x,y\in G\setminus \gamma_2(G)$. The proof is similar to Case 6.

        Therefore, $T(G)$ is almost commutative by Theorem \ref{thm:tanaka}(3).
    \end{proof}

    At this point, we have proved that for each of the four types of groups $G$ in Theorem \ref{thm:acclassify}, $T(G)$ is almost commutative. For the converse we need:

    \begin{Lem}\label{thm:acimplies}
        Suppose that $T(G)$ is almost commutative. Then for $x,y\in G$ with $x^G\neq (y^{-1})^G$, we have $x^Gy^G=(xy)^G$.
    \end{Lem}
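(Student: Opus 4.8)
The plan is to recast the statement in terms of intersection numbers and then play it off against the combinatorial condition in Theorem~\ref{thm:tanaka}(3). Write $C_i=x^G$ and $C_j=y^G$, and let $i^{*}$ denote the index with $C_{i^{*}}=(x^{-1})^G$. Recall that the intersection numbers of the group association scheme are the structure constants in $\overline{C_i}\cdot\overline{C_j}=\sum_{k}p_{ij}^{k}\,\overline{C_k}$, so the support of $\overline{C_i}\cdot\overline{C_j}$ is exactly the set product $x^{G}y^{G}=C_iC_j=\bigcup\{C_k : p_{ij}^{k}\neq 0\}$. Since $xy\in C_iC_j$, the class $C_h:=(xy)^G$ already satisfies $p_{ij}^{h}\neq 0$; hence it suffices to prove that $h$ is the \emph{only} index with $p_{ij}^{h}\neq 0$, for then $\overline{C_i}\cdot\overline{C_j}=p_{ij}^{h}\,\overline{C_h}$ and therefore $x^{G}y^{G}=(xy)^G$. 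Note also that the hypothesis $x^{G}\neq(y^{-1})^G$ is exactly the assertion that $C_{i}^{*}\neq C_j$, i.e.\ that the indices $i^{*}$ and $j$ are distinct.

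The one external ingredient I would use is the standard rotation relation for intersection numbers of an association scheme, which in our setting reads
\[|C_k|\,p_{ij}^{k}=|C_j|\,p_{i^{*}k}^{\,j}.\]
For the group scheme this is a one-line count: the number of pairs $(a,b)\in C_i\times C_j$ with $ab\in C_k$ equals $|C_k|\,p_{ij}^{k}$, and the bijection $(a,b)\mapsto(a^{-1},ab)$ identifies this set with the set of pairs $(a',c)\in C_{i}^{*}\times C_k$ whose product $a'c$ lies in $C_j$, a set of size $|C_j|\,p_{i^{*}k}^{\,j}$. Since class sizes are positive, this yields the equivalence $p_{ij}^{k}\neq 0\iff p_{i^{*}k}^{\,j}\neq 0$.

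Now suppose toward a contradiction that $p_{ij}^{h_1}\neq 0$ and $p_{ij}^{h_2}\neq 0$ for two distinct indices $h_1\neq h_2$. By the rotation relation, $p_{i^{*}h_1}^{\,j}\neq 0$ and $p_{i^{*}h_2}^{\,j}\neq 0$. Since $i^{*}\neq j$ — this is the only place the hypothesis $x^{G}\neq(y^{-1})^G$ enters — we may apply Theorem~\ref{thm:tanaka}(3) with the distinguished pair of distinct indices taken to be $(j,i^{*})$ in the roles of $(h,i)$; it tells us there is a unique index $m$ with $p_{i^{*}m}^{\,j}\neq 0$. But $h_1$ and $h_2$ are two such indices, a contradiction. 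Hence $h=(xy)^G$ is the unique class with $p_{ij}^{h}\neq 0$, which proves the claim.

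The main obstacle I anticipate is conceptual rather than computational: Theorem~\ref{thm:tanaka}(3) bounds, for a fixed product class and a fixed left factor, the number of admissible right factors, whereas what we want bounds, for a fixed pair of factors, the number of possible product classes — the transpose of that statement. The rotation relation is precisely the device that converts one into the other, and the hypothesis $x^{G}\neq(y^{-1})^G$ is exactly what guarantees that the two indices handed to Theorem~\ref{thm:tanaka}(3) are distinct, which is required for it to apply.
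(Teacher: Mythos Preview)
Your proof is correct and follows essentially the same approach as the paper. The paper proves, by direct element manipulation, that $C_m\subseteq C_iC_j$ implies $p_{i'm}^j\neq 0$ (via the substitution $c=b^{-1}a$), and then invokes the uniqueness in Theorem~\ref{thm:tanaka}(3) for the pair $(j,i')$; your version packages that same bijection $(a,b)\mapsto(a^{-1},ab)$ into the standard rotation identity $|C_k|\,p_{ij}^k=|C_j|\,p_{i^{*}k}^{\,j}$ and then argues by contradiction, but the logical content is identical.
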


    \begin{proof}
        Let $x,y\in G$ such that $x^G\neq (y^{-1})^G$. Let $x^G=C_i$ and $y^G=C_j$. Then $C_j\neq C_{i'}$. Since $C_j\neq C_{i'}$, and $T(G)$ is almost commutative, there exists a unique $h$ with $p_{i'h}^j\neq 0$, so $C_j\subseteq C_{i'}C_h$. Then for $z\in C_j$, there are $a\in C_{i'}$ and $b\in C_h$ such that $z=ab$. Then $b=a^{-1}z$, so $b\in C_iC_j$. Thus, $C_h\subseteq C_iC_j$.

        Now suppose that $C_m\subseteq C_iC_j$. Then for $a\in C_m$, there exists a $b\in C_i$ and $c\in C_j$ such that $a=bc$. Then $c=b^{-1}a$, so $c\in C_{i'}C_m$. Thus $C_j\subseteq C_{i'}C_m$. This implies $p_{i'm}^j\neq 0$. By the uniqueness of $p_{i'h}^j$, we have $C_m=C_h$. So $C_h$ is the only class in $C_iC_j$. We know that $\overline{C_i}\cdot\overline{C_j}=\sum_{\ell} p_{ij}^\ell \overline{C_\ell}$. For all $p_{ij}^\ell\neq 0$ we have $C_\ell\subseteq C_iC_j$. Since $C_h$ is the unique class in $C_iC_j$ we must have $C_iC_j=C_h$.

        Let us consider what $C_h$ could be. We know that $x\in C_i$ and $y\in C_j$ so $xy\in C_h$ as $C_iC_j=C_h$. Therefore, $C_h=(xy)^G$. Thus $x^Gy^G=(xy)^G$. 
    \end{proof}

    The following result now shows which groups $G$ satisfy the conclusion of Lemma \ref{thm:acimplies}.

    \begin{Thm}[Dade and Yadav, \cite{Dade}]\label{thm:groupequiv} A finite group $G$ satisfies the property that for all $x,y\in G$ such that $x^G\neq (y^{-1})^G$, $x^Gy^G=(xy)^G$ if and only if $G$ is isomorphic to exactly one of the groups in the following list:
    \begin{itemize}
        \item A finite abelian group.
        \item A non-abelian Camina $p-$group, for some prime $p$.
        \item $(\Z_p)^r\rtimes \Z_{p^{r-1}}$ for some prime $p$ and $r>0$.
        \item The group $(\Z_3)^2\rtimes Q_8$.
    \end{itemize}
    \end{Thm}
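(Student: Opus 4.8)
The plan is to prove the stated equivalence directly, reusing the conjugacy-class computations already carried out in this section for the forward implication and the structure theory of Camina groups for the converse; write $(\star)$ for the property ``$x^Gy^G=(xy)^G$ whenever $x^G\neq(y^{-1})^G$''.

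\emph{The four families satisfy $(\star)$.} For a finite abelian group every conjugacy class is a singleton, so $(\star)$ is immediate. For the other three families this is exactly what Lemma~\ref{thm:acimplies} gives once $T(G)$ is known to be almost commutative, which is established for non-abelian Camina $p$-groups in Theorem~\ref{thm:Caminapac}, for $(\Z_3)^2\rtimes Q_8$ in Proposition~\ref{thm: 72,41}, and for $(\Z_p)^r\rtimes\Z_{p^r-1}$ in Theorem~\ref{thm:almost2com} (noting, via Theorem~\ref{thm:twoclass}, that this last group is a Camina group whose derived subgroup is a union of two conjugacy classes). Equivalently, the explicit products of class sums written out in the proofs of those results display $(\star)$ directly.

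\emph{$(\star)$ forces $G$ into the list: reducing to Camina groups.} Now assume $G$ is a non-abelian finite group satisfying $(\star)$. The crucial first step is to show that $G$ is a Camina group, i.e. that $(G,G')$ is a Camina pair; by Lemma~\ref{lem:pair}(4) it suffices to prove $C_iC_j=C_j$ for every class $C_i\subseteq G'$ and every class $C_j\not\subseteq G'$. Since $C_i\subseteq G'$ while $C_j^{*}\not\subseteq G'$ we have $C_i\neq C_j^{*}$, so $(\star)$ applies and $C_iC_j$ is a single conjugacy class, necessarily lying in the same coset $g_jG'$ as $C_j$; letting $C_i$ run over the classes contained in $G'$ and using that these partition $G'$, we get a surjection from the set of classes inside $G'$ onto the set of classes inside $g_jG'$, sending $\{e\}$ to $C_j$. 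Composing with the analogous map $C\mapsto C\cdot(g_j^{-1})^G$ shows this surjection is injective off the fibre over $C_j$, and a count of class sizes inside the coset $g_jG'$ (together with the parallel analysis of $G'$ itself) then forces $g_jG'$ to be a single conjugacy class, which is the Camina property. This is the step I expect to be the main obstacle, since the surjection above need not be injective on all of $G'$ and some arithmetic with the structure constants is required to finish the count.

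\emph{Finishing via Dark--Scoppola.} With $G$ a non-abelian Camina group, the Dark--Scoppola theorem leaves three possibilities. If $G$ is a Camina $p$-group it is already on the list. If $G$ is a Frobenius group, write $G=K\rtimes H$ with $K=G'$ the Frobenius kernel; using that a Camina Frobenius group has abelian (indeed elementary abelian) kernel, the $G$-conjugacy classes inside $K$ are exactly the $H$-orbits on $K$, each nontrivial one of size $|H|$. Applying $(\star)$ to two such classes $O_1,O_2$ with $O_1\neq -O_2$ says the sumset $O_1+O_2\subseteq K$ is again a single $H$-orbit; but $\gcd(|K|,|H|)=1$ forces every $H$-orbit in $K$ to have trivial stabiliser in the additive group $K$, and comparing $|O_1+O_2|$ with $|O_1|$ then makes this impossible unless $K$ has a single nontrivial $H$-orbit (one also checks, by applying $(\star)$ to $O+O$, that $-1$ must act on $K$ inside the image of $H$, which disposes of the remaining configurations). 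Hence $|H|=|K|-1$ and $H$ acts as a Singer cycle on the elementary abelian group $K$: when $H$ is cyclic this gives $G\cong(\Z_p)^r\rtimes\Z_{p^r-1}$, and when $H\cong Q_8$ the equality $|K|=9$ together with the existence of a fixed-point-free $Q_8$-action on $K$ forces $K\cong(\Z_3)^2$ and $G\cong(\Z_3)^2\rtimes Q_8$. A short check that the four families are pairwise non-isomorphic then completes the classification.
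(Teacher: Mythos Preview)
The paper does not prove this theorem: it is quoted from the literature with the attribution ``Dade and Yadav, \cite{Dade}'' and used as a black box to finish the proof of Theorem~\ref{thm:acclassify}. So there is no paper proof to compare against; what matters is whether your argument is a valid independent proof.

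Your forward direction is fine: the proofs of Proposition~\ref{thm:abelac}, Proposition~\ref{thm: 72,41}, Theorem~\ref{thm:almost2com} and Theorem~\ref{thm:Caminapac} do not use Theorem~\ref{thm:groupequiv}, and Lemma~\ref{thm:acimplies} then converts ``$T(G)$ almost commutative'' into property~$(\star)$. The reverse direction, however, has two genuine gaps. First, the reduction to Camina groups is not established. You correctly observe that $(\star)$ makes each product $C_iC_j$ (with $C_i\subseteq G'$, $C_j\not\subseteq G'$) a single class inside $g_jG'$, and you build a map from classes in $G'$ to classes in $g_jG'$; but as you yourself flag, this map need not be injective, and the ``count of class sizes'' you allude to is not carried out. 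In particular, nothing in your sketch rules out that $g_jG'$ splits into several classes while the map collapses several classes of $G'$ onto one of them; the actual Dade--Yadav argument here is considerably more delicate than a size count.

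Second, in the Frobenius branch you assume without justification that the Frobenius kernel $K=G'$ is abelian (indeed elementary abelian) so that the $G$-classes in $K$ are the $H$-orbits. The Camina property alone does not give this: a Frobenius kernel is only nilpotent in general, and it is precisely $(\star)$, applied to products of classes inside $K$, that must force $K$ to be abelian and then elementary abelian. Your sumset argument $|O_1+O_2|$ versus $|O_1|$ only gets off the ground once that is known. Likewise, the assertion that ``$-1$ acts on $K$ inside the image of $H$'' and that the $Q_8$ case forces $|K|=9$ are stated but not argued. As written, the converse is a plausible outline but not a proof; to make it one you would essentially have to reproduce the content of \cite{Dade}.
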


    Combining Theorem \ref{thm:groupequiv} and Lemma \ref{thm:acimplies} we see for a group $G$, if $T(G)$ is almost commutative, then $G$ must be one of the four types of groups in Theorem \ref{thm:acclassify}. This completes the proof of Theorem \ref{thm:acclassify}. \qed 

\section{Wedderburn Decomposition for Almost Commutative Group Association Schemes}

Now that we have found exactly which groups have a group association scheme that produces an almost commutative Terwilliger algebra, we are going to find the Wedderburn decomposition of each of these Terwilliger algebras.

For finite abelian groups, we have:

\begin{Prop}[Bannai and Munemasa, \cite{Bannaiarticle}] Let $G$ be a finite abelian group. Then $\dim T(G)=|G|^2$.
\end{Prop}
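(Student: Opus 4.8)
The plan is to compute $\dim T(G)$ directly for a finite abelian group $G$ by exploiting the fact that when $G$ is abelian every conjugacy class is a singleton, so the group association scheme has $d+1 = |G|$ classes and the structure constants are extremely simple. First I would recall from the excerpt that the primary Wedderburn component $V$ always has dimension $(d+1)^2 = |G|^2$, so it suffices to show that $T(G) = V$, equivalently that every non-primary irreducible $T(G)$-module is $0$-dimensional — but that is impossible, so the real statement is that the non-primary part contributes nothing beyond what is already forced, i.e. $T(G)$ is \emph{equal} to its primary component. Since $T(G) \supseteq V$ always, the whole content is the reverse inclusion $\dim T(G) \le |G|^2$.

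The key computational step is to identify $T(G)$ with $T_0(G) = \Span_\C\{E_i^*(e)A_jE_k^*(e)\}$ and then count. By Proposition~\ref{thm:abelac}, $T(G)$ is almost commutative, hence triply regular by the last sentence of Theorem~\ref{thm:tanaka}, so $T(G) = T_0(G)$; therefore $\dim T(G) = |\{(i,j,k) : p_{ij}^k \ne 0\}|$ by the formula from \cite{Bannaiarticle} quoted in Section~2. Now for an abelian group, writing the classes as $C_g = \{g\}$ for $g \in G$, the adjacency matrix $A_g$ is the permutation matrix of right translation by $g$, and the product rule is simply $A_g A_h = A_{gh}$, i.e. $p_{gh}^k = 1$ if $k = gh$ and $0$ otherwise. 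Hence the number of triples $(g,h,k)$ with $p_{gh}^k \ne 0$ is exactly the number of pairs $(g,h)$, namely $|G|^2$, since $k$ is then uniquely determined. This gives $\dim T(G) = |G|^2$.

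An alternative route, which I would mention as a cross-check, is to observe that for abelian $G$ the Terwilliger algebra $T(e)$ is generated by the right regular representation of $G$ together with the diagonal matrices $E_g^*(e)$, which are all the elementary idempotents $E_{gg}$; these already generate all of $M_{|G|}(\C)$ unless there is extra structure, but here almost-commutativity forces the off-primary modules to be one-dimensional, and a dimension count of the Wedderburn decomposition ($|G|^2$ from the primary block plus a contribution from the one-dimensional blocks) combined with the known total must be reconciled — so the cleaner argument is really the triple-regularity/$T_0$ count above, and I would present that as the main line.

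The only mild obstacle is making sure the quoted results are applied to the correct base point and in the correct direction: one must check that the hypothesis ``$A_iA_j = \sum p_{ij}^k A_k$'' specializes correctly under the Schur-ring-to-scheme construction of Section~2 (so that $p_{gh}^k$ is indeed the Kronecker delta $[k=gh]$, given the convention $(A_i)_{xy}=1 \iff yx^{-1}\in P_i$), and that ``triply regular'' plus the $T_0$ dimension formula together yield $\dim T(G) = \dim T_0(G) = |\{(i,j,k): p_{ij}^k \neq 0\}|$. None of this requires new ideas, so the proof is short: invoke triple regularity, invoke the counting formula, and evaluate the count to $|G|^2$.
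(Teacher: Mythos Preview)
Your main argument is correct and follows exactly the template the paper uses for its later dimension computations (Proposition~\ref{cor:72,41 dim} and Theorem~\ref{cor:frobdim}): invoke almost commutativity via Proposition~\ref{thm:abelac}, deduce triple regularity from Theorem~\ref{thm:tanaka}, and then count nonzero structure constants. For abelian $G$ one has $p_{gh}^k\neq 0$ if and only if $k=gh$, so there are exactly $|G|^2$ such triples. Note, however, that the paper itself does not supply a proof of this proposition; it simply cites Bannai and Munemasa, so there is no in-paper argument to compare against directly.

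One remark on your ``alternative route'': it is in fact the cleaner and more self-contained argument, and you should not back away from it. When $G$ is abelian each class is a singleton, so each $E_g^*(e)$ is the rank-one diagonal idempotent with a $1$ in position $(g,g)$; then for any $h,k\in G$ the product $E_h^*(e)\,A_{kh^{-1}}\,E_k^*(e)$ is precisely the elementary matrix unit with a $1$ in position $(h,k)$. Hence $T(G)=M_{|G|}(\C)$ outright, giving $\dim T(G)=|G|^2$ without any appeal to Tanaka's theorem or to the $T_0$ counting formula. Your hesitation (``unless there is extra structure'') is unwarranted here: the matrix units are all present, so the full matrix algebra is generated.
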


We note that the primary component of the Wedderburn decomposition has dimension equal to the number of conjugacy classes squared. For an abelian group this is $|G|^2$. Hence, the Wedderburn decomposition of $T(G)$ is just $V$, where $V$ is the primary component.

For $(\Z_3)^2\rtimes Q_8$ we have:

\begin{Prop}\label{cor:72,41 dim}
        Let $G=(\Z_3)^2\rtimes Q_8$. Then $\dim T(G)=44$.
    \end{Prop}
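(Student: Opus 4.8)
The plan is to compute $\dim T(G)$ directly by exhibiting a spanning set for $T(G)$ and checking linear independence, exploiting the fact that we already know from Proposition \ref{thm: 72,41} that $T(G)$ is almost commutative (hence triply regular by Theorem \ref{thm:tanaka}), so that $T(G)=T_0(G)=\Span_\C\{E_i^*(e)A_jE_k^*(e)\colon 0\leq i,j,k\leq 5\}$. By the formula from \cite{Bannaiarticle} quoted in the excerpt, $\dim T_0(e)=|\{(i,j,k)\colon p_{ij}^k\neq 0\}|$. So the entire computation reduces to reading off the nonzero intersection numbers of the group association scheme of $G$, which in turn can be extracted from the products $\overline{C_i}\cdot\overline{C_j}$ already tabulated in the proof of Proposition \ref{thm: 72,41}.

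Concretely, I would first recall that for the group association scheme, $p_{ij}^k\neq 0$ if and only if the coefficient $\lambda_{ijk}$ of $\overline{C_k}$ in $\overline{C_i}\cdot\overline{C_j}$ is nonzero (here the relevant index convention aligns $p_{ij}^k$ with $\lambda_{i j k}$ up to the standard reindexing, which I would state carefully, noting $p_{ij}^k$ counts, for fixed $y$ with $(e,y)\in R_k$, the number of $z$ with $(e,z)\in R_i$, $(z,y)\in R_j$). Then I would go through the list of products: $\overline{C_0}\cdot\overline{C_j}=\overline{C_j}$ for $0\le j\le 5$ contributes the six triples with $i=0$; by commutativity and the identity role of $C_0$ one also gets the triples with $j=0$; and then each of the fifteen products $\overline{C_i}\cdot\overline{C_j}$ with $1\le i\le j\le 5$ contributes a specific small number of nonzero triples (e.g.\ $\overline{C_1}\cdot\overline{C_1}=9\overline{C_0}+9\overline{C_2}$ gives triples $(1,1,0)$ and $(1,1,2)$, while $\overline{C_3}\cdot\overline{C_3}=18\overline{C_0}+18\overline{C_1}+18\overline{C_2}$ gives three triples, and so on), and for $i<j$ each such triple must also be counted with $i,j$ swapped. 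Summing these counts — being careful about the overlap between the "$i=0$ or $j=0$" triples and the rest, and about double-counting on the diagonal $i=j$ — should yield exactly $44$.

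The main obstacle, such as it is, is purely bookkeeping: getting the index conventions straight (the correspondence between the algebra-side structure constants $\lambda_{ijk}$ and the scheme-side intersection numbers $p_{ij}^k$, and whether the nonzero set is symmetric in the relevant pair of indices) and then doing the inclusion–exclusion over the $6^3=216$ possible triples without arithmetic slips. A clean way to organize this is to tabulate, for each ordered pair $(i,j)$ with $0\le i,j\le 5$, the number $n_{ij}$ of indices $k$ with $p_{ij}^k\neq 0$; then $\dim T(G)=\sum_{i,j} n_{ij}$. From the data: the row/column for index $0$ contributes $1$ in each of its $11$ slots not already... — more simply, $n_{0j}=n_{i0}=1$ for all, and for $1\le i,j\le 5$ one reads $n_{ij}$ off the fifteen listed products (symmetrized), giving a $6\times 6$ integer matrix whose entries sum to $44$. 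I would present this as a short table and conclude. Since this is a single explicit finite group, everything is verifiable by the Magma computation already acknowledged in the paper, so the argument is really just a transparent recounting of the structure constants exhibited in Proposition \ref{thm: 72,41}.
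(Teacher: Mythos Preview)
Your proposal is correct and follows essentially the same approach as the paper: invoke triple regularity from Theorem~\ref{thm:tanaka} to reduce $\dim T(G)$ to the count $|\{(i,j,k):p_{ij}^k\neq 0\}|$, then read off the nonzero intersection numbers from the class-sum products already listed in the proof of Proposition~\ref{thm: 72,41}. Your suggestion to organize the count via the table $n_{ij}=\#\{k:p_{ij}^k\neq 0\}$ is a mild presentational improvement over the paper's more terse version, but the argument is the same.
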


    \begin{proof}
        By Theorem \ref{thm: 72,41}, $T(G)$ is almost commutative. Then by Theorem \ref{thm:tanaka}, $T(G)$ is triply regular. Therefore, $T(G)=T_0(G)$ and $\dim T(G)=|\{(i,j,k)\colon p_{ij}^k\neq 0\}|$. Within the proof of Theorem \ref{thm: 72,41} we computed every product of the form $\overline{C_i}\cdot\overline{C_j}$ for $i\leq j$. Then in computing the number of $(i,j,k)$ such that $p_{ij}^k\neq 0$ we count the number of $p_{ij}^k\neq 0$ that we got in the products in the proof of Theorem \ref{thm: 72,41}. In this counting, for each $i\neq j$ we get two nonzero triples, namely $(i,j,k)$ and $(j,i,k)$ where $p_{ij}^k\neq 0$. Doing this we find $|\{(i,j,k)\colon p_{ij}^k\neq 0\}|=44$, so $\dim T(G)=44$. 
        \end{proof}

        We next determine the dimension of $Z(T(G))$, the center of $T(G)$. This is equal to the number of Wedderburn components $T(G)$ has. We use the method of \cite{GroupAssoc1}.

        \begin{Prop}\label{prop:72,41 center}
            Let $G=(\Z_3)^2\rtimes Q_8$. Then $\dim Z(T(G))=9$.
        \end{Prop}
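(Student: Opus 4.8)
The plan is to read off $\dim Z(T(G))$ from the Wedderburn decomposition of $T(G)$. Since $T(G)$ is semisimple, $\dim Z(T(G))$ equals the number of its Wedderburn components, so it suffices to count those. By Theorem \ref{thm: 72,41}, $T(G)$ is almost commutative, hence every non-primary irreducible $T(G)$-module is $1$-dimensional; and since $\C^{|G|}$ is a faithful $T(G)$-module, every isomorphism class of irreducible $T(G)$-module occurs in its decomposition, so every non-primary Wedderburn component is isomorphic to $M_1(\C)=\C$. Thus each non-primary component contributes exactly $1$ both to $\dim T(G)$ and to the count of components. The primary component $V$ is $M_{d+1}(\C)$, and here $d+1=6$ because $G$ has the six conjugacy classes $C_0,\dots,C_5$ listed in the proof of Theorem \ref{thm: 72,41}; so $\dim V=36$.

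Writing $k$ for the number of non-primary components, we therefore have $\dim T(G)=36+k$. By Proposition \ref{cor:72,41 dim}, $\dim T(G)=44$, so $k=8$, and $T(G)$ has $1+k=9$ Wedderburn components in total. Hence $\dim Z(T(G))=9$.

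The only real point requiring care is the assertion that the excess dimension $44-36=8$ is accounted for entirely by $1$-dimensional components: this combines the fact that almost commutativity forces every non-primary component to have the form $M_1(\C)$ with the faithfulness of $\C^{|G|}$ as a $T(G)$-module (so that no Wedderburn component escapes the module decomposition of $\C^{72}$). Both facts are immediate from the setup of Section~2, so I expect no genuine obstacle here; the argument is essentially bookkeeping once Propositions \ref{thm: 72,41} and \ref{cor:72,41 dim} are in hand.

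As an independent check, one can instead follow the method of \cite{GroupAssoc1} and exhibit an explicit basis of $Z(T(G))$ built from the decomposition of $\C^{72}$ into irreducible $T(G)$-modules; this should recover the value $9$ directly and confirm that the primary module contributes one central idempotent while the eight $1$-dimensional non-primary modules contribute the remaining eight.
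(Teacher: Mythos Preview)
Your argument is correct and takes a genuinely different route from the paper. The paper establishes $\dim Z(T(G))=9$ by an explicit Magma computation: it embeds $Z(T(G))$ in $\bigoplus_i Z(E_i^*T(G)E_i^*)$, writes a general element in terms of a basis of this larger space, and solves the linear system arising from commutation with the generators $E_i^*A_jE_k^*$ of $T(G)=T_0(G)$. Only afterwards, in Theorem~\ref{cor: 72,41 decomp}, does the paper combine this value with $\dim T(G)=44$ and $\dim V=36$ to deduce that the eight non-primary components are each $1$-dimensional. You invert this logic: almost commutativity (Theorem~\ref{thm: 72,41}) together with the faithfulness of $\C^{|G|}$ as a $T(G)$-module already forces every non-primary Wedderburn component to be $M_1(\C)$, so the number of components is read off directly from $\dim T(G)-\dim V=8$. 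Your approach is shorter, avoids machine computation entirely, and in effect collapses Proposition~\ref{prop:72,41 center} and Theorem~\ref{cor: 72,41 decomp} into a single step; the paper's computational route, by contrast, gives an independent verification and illustrates a method (following \cite{GroupAssoc1}) that remains available even when one does not know a priori that the non-primary modules are $1$-dimensional.
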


        \begin{proof}
            Since $T(G)$ contains all the $E_i^*$ matrices, $Z(T(G))$ consists of block diagonal matrices and
            \[Z(T(G))\subseteq \bigoplus_{i=0}^d Z(E_i^*T(G)E_i^*).\]
            To find $\dim Z(T(G))$, we explicitly determine a basis for $Z(T(G))$. To do so, we first determine a basis for $Z(E_i^*T(G)E_i^*)$ for each $i=0,1,\cdots, d$. Let $\{b_j\}$ be the union of these bases. Thus if $y\in Z(T(G))$, $y=\sum c_j b_j$, for some $c_j\in \Z$. The system of linear equations $\{x_iy=yx_i\}$, ranging over all elements $x_i\in \{E_i^*A_jE_k^*\colon p_{ij}^k\neq 0\}$, the basis of $T(G)=T_0(G)$, can then be solved to yield the scalars $c_j$ and the required basis for $Z(T(G))$. As we are looking at $G=(\Z_3)^2\rtimes Q_8$ this calculation can be done using Magma \cite{Magma}, to find $\dim Z(T(G))=9$.
        \end{proof}

        \begin{Thm}\label{cor: 72,41 decomp}
            Let $G=(\Z_3)^2\rtimes Q_8$. Then the Wedderburn decomposition of $T(G)$ is
            \[T(G)\cong V\oplus Z_1\oplus Z_2 \oplus Z_3 \oplus \cdots \oplus Z_8 \]
            where each $Z_i$, $1\leq i\leq 8$ is a $1-$dimensional ideal of $T(G)$ and $V$ is the primary component. 
        \end{Thm}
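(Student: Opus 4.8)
The plan is to combine the dimension count from Proposition \ref{cor:72,41 dim} with the center computation from Proposition \ref{prop:72,41 center} and the general structure theory of Terwilliger algebras of almost commutative schemes. Since $T(G)$ is semisimple (as noted in Section 2, because $|\Omega| = |G| > 1$), it has a Wedderburn decomposition $T(G) \cong \bigoplus_{s} M_{n_s}(\C)$ into full matrix algebras over $\C$. The number of summands equals $\dim Z(T(G))$, which Proposition \ref{prop:72,41 center} tells us is $9$; so there are exactly $9$ Wedderburn components, say with dimensions $n_0^2, n_1^2, \dots, n_8^2$ where $\sum_{s=0}^{8} n_s^2 = \dim T(G) = 44$ by Proposition \ref{cor:72,41 dim}.

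Next I would pin down the components using the almost commutative hypothesis. Since $T(G)$ is almost commutative (Proposition \ref{thm: 72,41}), every non-primary irreducible $T(G)$-module is $1$-dimensional. The primary module $\mathfrak{A}\hat{x}$ has dimension $d+1 = 6$ (there are $d+1 = 6$ conjugacy classes), so the primary component $V$ has dimension $(d+1)^2 = 36$, i.e. $V \cong M_6(\C)$. Every other irreducible $T(G)$-module is $1$-dimensional, and any Wedderburn component $M_{n_s}(\C)$ with $n_s \geq 2$ would furnish an irreducible module of dimension $n_s \geq 2$; the only way to have an irreducible module of dimension $\geq 2$ in an almost commutative algebra is for it to be the primary module. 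Hence all components other than $V$ are $1$-dimensional, i.e. isomorphic to $\C$. This forces the decomposition to be $V \oplus Z_1 \oplus \cdots \oplus Z_k$ with each $Z_i \cong \C$ a $1$-dimensional ideal; the count $9 = 1 + k$ gives $k = 8$, and this is consistent with the dimension since $36 + 8 \cdot 1 = 44$.

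Finally I would assemble these observations into the stated conclusion: $T(G) \cong V \oplus Z_1 \oplus Z_2 \oplus \cdots \oplus Z_8$ where each $Z_i$ is a $1$-dimensional ideal and $V$ is the primary component of dimension $36$. The two numerical inputs ($\dim T(G) = 44$ and $\dim Z(T(G)) = 9$) are exactly what makes the bookkeeping close up, and they should be cited directly.

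I do not expect a serious obstacle here; the only subtlety worth stating carefully is the step identifying the primary component $V$ with $M_6(\C)$ and arguing that no other component can have matrix size $\geq 2$ — this uses the general fact (cited from \cite{WreathProduct1} and \cite{Terwilliger1} in the excerpt) that $T(x)$ always has an irreducible ideal of dimension $(d+1)^2$ together with the correspondence between Wedderburn components of size $n_s$ and irreducible modules of dimension $n_s$. Once that is in place, the definition of almost commutative does all the remaining work, and the dimension and center counts merely confirm the tally.
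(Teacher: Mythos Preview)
Your proposal is correct and follows essentially the same route as the paper: use $\dim Z(T(G))=9$ to count Wedderburn components, identify the primary component $V\cong M_6(\C)$ of dimension $36$, and conclude that the remaining $8$ components are each $1$-dimensional. The only minor difference is that the paper reaches the last step by pure arithmetic ($44-36=8$ spread over $8$ nonzero summands forces each to be $1$), whereas you invoke the almost-commutative hypothesis directly to force $n_s=1$ for non-primary components and then use $44=36+8$ only as a consistency check; either argument suffices.
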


        \begin{proof}
            Recall that $\dim Z(T(G))$ equals the number of irreducible $2-$sided ideals of $T(G)$. Then by Proposition \ref{prop:72,41 center}, $T(G)$ has $9$ irreducible $2-$sided ideals. We know that $\dim V$ is the square of the number of classes of $G$. Note $G$ has $6$ classes, so $\dim V=36$. From Corollary \ref{cor:72,41 dim}, $\dim T(G)=44$, so the sum of the dimensions of the Wedderburn components of $T(G)$ outside of $V$ is $8$. As $T(G)$ has a total of $9$ irreducible two-sided ideals, $8$ of which are not $V$, each of the other $8$ is $1-$dimensional.
        \end{proof}

        Using Magma, one can directly compute the primitive idempotents corresponding to each of these nine Wedderburn components of $T((\Z_3)^2\rtimes Q_8)$. 
        
        For Frobenius groups $(\Z_p)^n\rtimes \Z_{p^n-1}$, we have:

         \begin{Thm}\label{cor:frobdim}
            Let $G=\Z_{p}^n\rtimes \Z_{p^n-1}$ for some prime $p$ and $n\geq 1$. Then $\dim T(G)=p^{2n}+p^n-1$.
        \end{Thm}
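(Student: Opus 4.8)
The plan is to use the fact that $T(G)$ is almost commutative (Corollary \ref{cor:frobac}), hence triply regular by Theorem \ref{thm:tanaka}, so that $\dim T(G)=|\{(i,j,k)\colon p_{ij}^k\neq 0\}|$. Thus the whole task reduces to counting nonzero intersection numbers, and for a Frobenius group $G=\Z_p^n\rtimes\Z_{p^n-1}$ these can be read off from the conjugacy-class multiplication. By Theorem \ref{thm:twoclass}, $G$ is a Camina group whose derived subgroup $G'=\Z_p^n$ is the union of the two classes $C_0=\{e\}$ and $C_1=G'\setminus\{e\}$; the remaining classes $C_2,\dots,C_d$ are precisely the nontrivial cosets of $G'$ in $G$, and there are $[G:G']-1=p^n-2$ of them, so $d=p^n-1$ and $G$ has $p^n$ conjugacy classes in all.

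The counting itself I would organize exactly along the case split already appearing in the proof of Theorem \ref{thm:almost2com}. First I would record the products: $\overline{C_0}\cdot\overline{C_j}=\overline{C_j}$ for all $j$; $\overline{C_1}\cdot\overline{C_1}=(p^n-2)\overline{C_1}+\text{(something)}\,\overline{C_0}$ — more precisely one computes $\overline{G'}\cdot\overline{G'}=|G'|\overline{G'}$ so $\overline{C_1}^2=(|G'|-2)\overline{C_1}+(|G'|-1)\overline{C_0}$, which is nonzero on both $C_0$ and $C_1$; $\overline{C_1}\cdot\overline{C_j}=|C_1|\overline{C_j}$ for $j\geq 2$ by Lemma \ref{lem:pair}(4); and for $i,j\geq 2$, $\overline{C_i}\cdot\overline{C_j}$ is supported on the single class $C_k$ determined by $g_ig_jG'=g_kG'$ in the cyclic (hence abelian) quotient $G/G'\cong\Z_p^n$ — this is the content of Case 2 of Theorem \ref{thm:almost2com}, and the target class is $C_0$ exactly when $g_jG'=(g_iG')^{-1}$, and a nontrivial class $C_k$ otherwise. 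Then I would tally: the pairs $(i,j)$ with $i=0$ or $j=0$ contribute $2(p^n)-1=2p^n-1$ triples; the pair $(1,1)$ contributes $2$ triples; the pairs with exactly one index equal to $1$ and the other $\geq 2$ contribute $2(p^n-2)$ triples; and the pairs $(i,j)$ with $i,j\geq 2$ contribute one triple each, i.e. $(p^n-2)^2$ triples. Summing, $2p^n-1+2+2(p^n-2)+(p^n-2)^2 = p^{2n}+p^n-1$ after simplification, which is the claimed dimension.

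The main obstacle — really the only place that requires care — is getting the boundary bookkeeping right: making sure each unordered pair is counted with the correct multiplicity ($2$ when $i\neq j$, $1$ when $i=j$), correctly handling the "diagonal" pairs $(i,i)$ with $i\geq 2$ (here $g_i^2G'$ is trivial precisely when $g_iG'$ has order dividing $2$ in $\Z_p^n$, i.e. only when $p=2$ and $g_iG'$ is an involution, which merely redirects the target class to $C_0$ but does not change the count of one nonzero triple per pair), and the degenerate case $p^n=2$ where $G'\setminus\{e\}$ is a single element and there are no classes outside $G'$ other than... in fact $p^n=2$ forces $\Z_{p^n-1}$ trivial, $G\cong\Z_2$, and one checks the formula still gives $4+2-1=5$... wait, that is wrong for $\Z_2$, so I should note the hypothesis implicitly requires $p^n-1\geq 1$ with the complement acting faithfully, i.e. the interesting range is $p^n\geq 3$, and verify the small case $p=2,n=1$ separately if the statement is meant to include it. Once the case analysis of Theorem \ref{thm:almost2com} is invoked to guarantee exactly one nonzero $p_{ij}^k$ for $i,j\geq 2$ (and the explicit two-term product for $(1,1)$), the arithmetic is routine and the identity $2p^n+1+2(p^n-2)+(p^n-2)^2=p^{2n}+p^n-1$ closes the proof.
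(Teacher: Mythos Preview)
Your overall strategy matches the paper's exactly: invoke almost commutativity and triple regularity to reduce to counting nonzero $p_{ij}^k$, then split on the position of $C_i$ and $C_j$ relative to $G'$. The class description and the total number of classes $p^n$ are also correct (though note $G/G'\cong\Z_{p^n-1}$, not $\Z_p^n$).

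There is, however, a genuine gap in the count for pairs $(i,j)$ with $i,j\geq 2$. You assert that $\overline{C_i}\cdot\overline{C_j}$ is supported on a \emph{single} class, with ``the target class \dots\ $C_0$ exactly when $g_jG'=(g_iG')^{-1}$.'' But when $g_jG'=(g_iG')^{-1}$ one has
\[
\overline{C_i}\cdot\overline{C_j}=\overline{g_iG'}\cdot\overline{g_i^{-1}G'}=|G'|\,\overline{G'}=|G'|\,\overline{C_0}+|G'|\,\overline{C_1},
\]
so \emph{two} intersection numbers $p_{ij}^0$ and $p_{ij}^1$ are nonzero, not one. For each $i\geq 2$ there is exactly one such $j$, so you have undercounted by precisely $p^n-2$. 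Indeed, your displayed sum does not actually equal what you claim:
\[
(2p^n-1)+2+2(p^n-2)+(p^n-2)^2=p^{2n}+1,
\]
not $p^{2n}+p^n-1$. The correct contribution from $i,j\geq 2$ is $(p^n-2)(p^n-3)+2(p^n-2)=(p^n-2)(p^n-1)$, and then
\[
(2p^n-1)+2+2(p^n-2)+(p^n-2)(p^n-1)=p^{2n}+p^n-1,
\]
which is exactly how the paper organizes Case~3. Once you fix this single miscount the argument goes through.
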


        \begin{proof}
         By Corollary \ref{cor:frobac}, $T(G)$ is almost commutative. Then by Theorem \ref{thm:tanaka}, $T(G)$ is triply regular. Therefore, $T(G)=T_0(G)$ and $\dim T(G)=|\{(i,j,k)\colon p_{ij}^k\neq 0\}|$. 
         
         Since $G$ is a Camina group, its classes outside of $G'$ are all nontrivial cosets of $G'$. By Theorem \ref{thm:twoclass}, the classes of $G$ inside $G'$ are $\{e\}$ and $G'\setminus \{e\}$. Note $G'\cong \Z_{p}^n$ and $G/G'\cong \Z_{p^n-1}$. Then there are $p^n-2$ classes of $G$ outside of $G'$. So the number of classes of $G$ is $2+(p^n-2)=p^n$. We count the number of triples by considering cases based the types of classes. Let $C_0=\{e\}$ and $C_1=G'\setminus \{e\}$.

        {\bf Case 1: }$C_i=C_0$. Now $\overline{C_0}\cdot\overline{C_j}=\overline{C_j}$ for all $j$, thus there are $p^n$ triples $(0,j,j)$ (or $(j,0,j)$) such that $p_{0j}^j\neq 0$ (or $p_{j0}^j\neq 0$), giving a total of $2p^n$ triples that involve the class $C_i=\{e\}$, $p_{ij}^h\neq 0$. This double counts the triple $(0,0,0)$. Hence, there are $2p^n-1$ nonzero triples that involve $C_0=\{e\}$.

        {\bf Case 2: }$C_i=C_1$. We consider $\overline{C_1}\cdot\overline{C_j}$, where $j>1$. Then $\overline{C_1}\cdot\overline{C_j}=\overline{(G'-e)}\cdot\overline{gG'}=|G'|\overline{gG'}-\overline{gG'}=(|G'|-1)\overline{gG'}=(p^n-1)\overline{C_j}$. So $(1,j,j)$ is a triple with $p_{1j}^j\neq 0$. Similarly, $(j,1,j)$ has $p_{j1}^j\neq 0$. There are $p^n-2$ possible choices for $C_j$, so we have found $2(p^n-2)$ triples. Now $\overline{C_1}\cdot\overline{C_1}=\overline{G'\setminus \{e\}}\cdot \overline{G'\setminus \{e\}}=|G'|\overline{G'}-2\overline{G'}+\{e\}=(p^n-2)\overline{G'\setminus \{e\}}+(p^n-1)\overline{\{e\}}$. Then the triples $(1,1,0)$ and $(1,1,1)$ also have nonzero $p_{11}^0$ and $p_{11}^1$. Thus, when $C_1$ is a class in the product we get $2p^n-4+2=2p^n-2$ nonzero triples.

        {\bf Case 3: } $C_i=gG'$ where $g\not\in G'$. We may consider $C_j=hG'$ as we have already counted all the classes that are not nontrivial cosets of $G'$ in $G$. For all $C_j\neq g^{-1}G'$, we have $C_iC_j=ghG'\neq G'$. This implies $\overline{C_i}\cdot\overline{C_j}=|G'|\overline{ghG'}=|G'|\overline{C_k}$ for some conjugacy class $C_k$. Thus, $(i,j,k)$ is the only triple with our choice of $i,j$ such that $p_{ij}^k\neq 0$. With $C_i$ fixed, we have a total of $p^n-2-1$ choices for $C_j$ such that $C_j\neq g^{-1}G'$. As each choice of $C_j$ gives a unique $C_k$ such that $p_{ij}^k\neq 0$ we get a total of $p^n-3$ triples $(i,j,k)$ in this case. If $C_j=g^{-1}G'$, then $\overline{C_i}\cdot\overline{C_j}=\overline{gG'}\cdot\overline{g^{-1}G'}=|G'|\overline{G'}=|G'|\overline{C_0}+|G'|\overline{C_1}$. Thus, the triples $(i,j,0)$ and $(i,j,1)$ have nonzero $p_{ij}^0$ and $p_{ij}^1$ in this case. We then have found a total of $p^n-1$ different $(i,j,k)$ such that $p_{ij}^k\neq 0$ for our fixed $C_i$. We have a total of $p^n-2$ choices for $C_i$, hence there are a total of $(p^n-1)(p^n-2)$ triples $(i,j,k)$ such that $p_{ij}^k\neq 0$. This gives
        \[\dim(T(G))=2p^n-1+2p^n-2+(p^n-2)(p^n-1)=p^{2n}+p^n-1. \qedhere\]
        \end{proof}

For the nonprimary Wedderburn components of $T(G)$, $G=(\Z_p)^n\rtimes \Z_{p^n-1}$, we do the following. Order the elements of $G$ so all the elements of $C_0$ are first, then all those in $C_1$ and so on. Then every element of $T(G)$ can be thought of as a block matrix where the $C_i,C_j$ block of a matrix $B\in T(G)$ refers to those entries $a,b$ of $B$ with $a\in C_i$ and $b\in C_j$. We shall assume this ordering from this point on. Then the primary component $V$ has a basis $\{v_{ij}\colon 0\leq i,j\leq d\}$ where $v_{ij}$ has all $1$'s in the $C_i,C_j$ block and is $0$ outside this block. The primitive idempotent corresponding to $V$ in this case is block diagonal with the $C_i,C_i$ block having $\frac{1}{|C_i|}$ in every entry, for each $i$.

\begin{Prop}\label{Prop:frob1dim}
    Let $G=Z_p^{n}\rtimes Z_{p^n-1}$ for some prime $p$ and $n\geq 1$. Suppose $C_0=\{e\},C_1,\cdots, C_{p^n-1}$ are the classes of $G$. Define $B_i$ to be the $|G|\times |G|$ matrix indexed by $G$ which is $0$ outside the $C_i,C_i$ block and has $C_i,C_i$ block
    \[d_{ii}=\frac{-1}{|C_i|-1}J_{|C_i|}+\left(1-\frac{-1}{|C_i|-1}\right)I_{|C_i|}=\begin{pmatrix}
        1 & \frac{-1}{|C_i|-1} & \frac{-1}{|C_i|-1} & \cdots & \frac{-1}{|C_i|-1} \\ \frac{-1}{|C_i|-1} & 1 & \frac{-1}{|C_i|-1} & \cdots & \frac{-1}{|C_i|-1} \\
        \vdots & & \ddots & & \vdots \\
        \vdots & & & \ddots & \vdots \\
        \frac{-1}{|C_i|-1} & \frac{-1}{|C_i|-1} & \cdots & \frac{-1}{|C_i|-1} & 1
    \end{pmatrix}\]
    where $J_{|C_i|}$ is the $|C_i|\times |C_i|$ all $1$'s matrix and $I_{|C_i|}$ is the $|C_i|\times |C_i|$ identity matrix. Then for $1\leq i\leq p^n-1$, $W_i=\Span(B_i)$ is a $2-$sided ideal of $T(G)$.
\end{Prop}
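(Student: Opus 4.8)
The plan is to verify that $B_i$ lies in $T(G)$ and that left and right multiplication by each algebra generator of $T(G)$ carries $B_i$ into $\C B_i$; since $T(G)$ is generated as an algebra by $A_0,\dots,A_d$ together with $E_0^*,\dots,E_d^*$, a routine induction on word length then shows that $W_i=\C B_i$ is a two-sided ideal. Writing $q_i=|C_i|$, one first checks entrywise that $B_i=\tfrac{q_i}{q_i-1}E_i^*-\tfrac{1}{q_i-1}E_i^*JE_i^*$, where $J=\sum_{j=0}^{d}A_j$ is the all-ones matrix; since $J\in\mathfrak{A}$ and $E_i^*\in\mathfrak{A}^*(e)$, this gives $B_i\in T(G)$. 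Because $E_k^*$ is the diagonal projection onto the coordinates indexed by $C_k$ while $B_i$ is supported on the $C_i,C_i$ block, we get $E_k^*B_i=B_iE_k^*=\delta_{ki}B_i$. Finally $B_i^t=B_i$ and $A_j^t=A_{j'}$, where $C_{j'}=C_j^{-1}$, so $B_iA_j=(A_{j'}B_i)^t$; thus everything reduces to showing, for each $j$, that $A_jB_i=\lambda_jB_i$ for some scalar $\lambda_j$.

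To prove this I would compute the $(x,y)$-entry of $A_jB_i$. Using that $(A_j)_{xy}=1$ exactly when $yx^{-1}\in C_j$, together with the formula for $B_i$ above, this entry vanishes unless $y\in C_i$, in which case it equals
\[\frac{q_i}{q_i-1}\bigl[\,yx^{-1}\in C_j\,\bigr]\;-\;\frac{1}{q_i-1}\,\bigl|\{z\in C_i: zx^{-1}\in C_j\}\bigr|.\]
The structural input is that $G$ is a Camina group whose conjugacy classes are $C_0=\{e\}$, $C_1=G'\setminus\{e\}$, and $C_2,\dots,C_{p^n-1}$, the nontrivial cosets of $G'$; in particular every $C_j$ is either contained in $G'$ or equal to a coset of $G'$, distinct cosets of $G'$ are disjoint, and $C_i$ (for $i\geq 1$) is $G'\setminus\{e\}$ or a coset, so $C_ix^{-1}$ is, up to deletion of at most one point, a single coset of $G'$. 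From this one argues: if $x\notin C_i$, then $[\,yx^{-1}\in C_j\,]$ is a constant $\varepsilon\in\{0,1\}$ as $y$ ranges over $C_i$, hence $|\{z\in C_i: zx^{-1}\in C_j\}|=q_i\varepsilon$ and the displayed entry is $0$, matching $(B_i)_{xy}=0$. If $x\in C_i$, then $C_ix^{-1}$ equals $G'$ or $G'\setminus\{x^{-1}\}$; for $j\geq 2$ the entry is then $0$ (so $\lambda_j=0$), for $j=0$ we have $A_0B_i=B_i$ trivially, and for $j=1$ one gets $[\,yx^{-1}\in C_1\,]=[\,y\neq x\,]$ and $|\{z\in C_i:zx^{-1}\in C_1\}|=q_i-1$, which makes the entry $-1$ on the diagonal and $\tfrac{1}{q_i-1}$ off it, i.e.\ exactly $-(B_i)_{xy}$. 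Hence $A_jB_i=\lambda_jB_i$ with $\lambda_0=1$, $\lambda_1=-1$, and $\lambda_j=0$ for $j\geq 2$, which completes the argument. (One then also sees that $\tfrac{q_i-1}{q_i}B_i$ is an idempotent, so $W_i$ is a $1$-dimensional Wedderburn component of $T(G)$.)

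I expect the only real work to be the case analysis in the previous paragraph: verifying that $[\,yx^{-1}\in C_j\,]$ is constant on $C_i$ when $x\notin C_i$, and pinning down $C_ix^{-1}$ precisely when $x\in C_i$. This is exactly where the Camina hypothesis enters, through the description of the conjugacy classes of $G$ as (almost) cosets of $G'$, equivalently through the identity $G'C_j=C_j$ for every class $C_j\not\subseteq G'$ (Lemma \ref{lem:pair}). The reduction to generators, the identities for the $E_k^*$, and the transpose trick are all straightforward bookkeeping with the block decomposition.
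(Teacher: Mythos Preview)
Your proposal is correct and follows essentially the same route as the paper: you exhibit $B_i$ as $E_i^*(\alpha I+\beta J)E_i^*\in T(G)$, observe $E_k^*B_i=B_iE_k^*=\delta_{ik}B_i$, reduce right multiplication to left via transposes, and then verify $A_jB_i\in\C B_i$ by a case analysis on $C_j$ using the Camina description of the classes as (almost) cosets of $G'$. The only cosmetic difference is that the paper organizes the computation of $A_kB_i$ blockwise (computing $a_{t,i}d_{ii}$ for each block $a_{t,i}$ of $A_k$) whereas you compute entrywise, but the case split and the resulting scalars $\lambda_0=1$, $\lambda_1=-1$, $\lambda_j=0$ for $j\geq 2$ are identical.
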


\begin{proof}
    From the proof of Corollary \ref{cor:frobdim} the conjugacy classes of $G$ are $\{e\},G'\setminus \{e\}$, and the nontrivial cosets of $G'$.
    We start by proving that for all $1\leq i\leq p^n-1$, $B_i\in T(G)$. Recall that $\sum_{k=0}^{p^n-1} A_k=J$, the all $1$'s matrix, and that $A_0=I_{|\Omega|}$. Consider for any $1\leq i\leq p^n-1$, 
    \[E_i^*\left(A_0-\frac{1}{|C_i|-1}\sum_{k=1}^{p^n-1} A_k\right)E_i^*=E_i^*\left(I-\frac{1}{|C_i|-1}(J-I)\right)E_i^*.\]
    This matrix is $0$ outside of the $C_i,C_i$ block and the $C_i,C_i$ block is $d_{ii}$. Hence, for $1\leq i\leq p^n-1$, $B_i=E_i^*\left(A_0-\frac{1}{|C_i|-1}\sum_{k=1}^{p^n-1} A_k\right)E_i^*\in T(G)$ for all $i$.

    We now show that $W_i$ is closed under multiplication by any element of $T(G)$. To do so, we show it is closed under multiplication by the generators of $T(G)$. For the $E_i^*$ matrices, $E_i^*E_j^*=E_i^*$ if $i=j$ and $0$ otherwise. Then, as $B_i=E_i^*\left(A_0-\frac{1}{|C_i|-1}\sum_{k=1}^{p^n-1} A_k\right)E_i^*$, we have
    \[E_j^*B_i=\left\{\begin{array}{cc}
        B_i & \text{ if }i=j  \\
        0 &  \text{ otherwise }
    \end{array}\right.,\ \ \ \ \ B_iE_j^*=\left\{\begin{array}{cc}
        B_i & \text{ if }i=j  \\
        0 & \text{ otherwise } 
    \end{array}\right..\]
    So for any $i$, $W_i$ is closed under multiplication by the $E_j^*$ matrices.

    We now consider $A_kB_i$. Using block notation and letting $a_{t,i}$ be the $C_t,C_i$ block of $A_k$, with $*$ symbolizing an unspecified value, we have

\begin{equation}\label{eq:blockmultiply3}
A_kB_i=\begin{pmatrix} * & \cdots & a_{0,i} & * & \cdots\\ * & \cdots & a_{1,i} & * &\cdots \\
\vdots & \vdots & \vdots & \vdots & \vdots\\
* & \cdots & a_{t,i} & * &\cdots \\
\vdots & \vdots & \vdots & \vdots & \vdots\\
* & \cdots & a_{n,i} & * & \cdots
\end{pmatrix}\begin{pmatrix}  0 & \cdots & 0 & 0 & \cdots\\ 0 & \cdots & 0 & 0 &\cdots \\
\vdots & \vdots & \vdots & \vdots & \vdots\\
0 & \cdots & d_{ii} & 0 &\cdots \\
\vdots & \vdots & \vdots & \vdots & \vdots\\
0 & \cdots & 0 & 0 & \cdots\end{pmatrix}= \begin{pmatrix} 0 & \cdots & a_{0,i}d_{ii} & 0 & \cdots\\ 0 & \cdots & a_{1,i}d_{ii} & 0 &\cdots \\
\vdots & \vdots & \vdots & \vdots & \vdots\\
0 & \cdots & a_{t,i}d_{ii} & 0 &\cdots \\
\vdots & \vdots & \vdots & \vdots & \vdots\\
0 & \cdots & a_{n,i}d_{ii} & 0 & \cdots \end{pmatrix}
\end{equation}
where $d_{ii}$ is as defined above.
    We consider the following cases arising from choices of blocks of $A_k.$

    {\bf Case 1: }The $C_0,C_i$ block $a_{0,i}$ of $A_k$ is a $1\times |C_i|$ matrix. If the entry $(e,x)$ is $1$, then $x\in C_k$. Since we are considering the $C_0,C_i$ block we have $x\in C_i$, so $C_i=C_k$ in this case. Then every entry in the block is nonzero. So the $C_0,C_i$ block of $A_k$ is either all $0$'s or all $1$'s. Notice if $a_{0,i}$ is all $0$'s we have $a_{0,i}d_{ii}=0$. If $a_{0,i}$ is all $1$'s then $a_{0,i}d_{ii}$ is just the column sum of each column of $d_{ii}$, which is $0$. So $a_{0,i}d_{ii}=0$ in this case as well. Thus, $a_{0,i}d_{ii}=0$ in either case.

    {\bf Case 2: }Consider the $C_t,C_i$ block, $a_{t,i}$, of $A_k$ when $C_t=G'\setminus \{e\}$. We consider subcases based on if $C_i=C_t$ or not.\\
    {\bf Subcase 2.1: } $C_i=C_t$. If the $(x,y)$ entry of $A_k$ is nonzero, then $yx^{-1}\in C_k$. As $x\in C_t$ and $y\in C_i=C_t$ we have $x,y\in G'$. Then $yx^{-1}\in G'$, and as $yx^{-1}\in C_k$ either $C_k=\{e\}$ or $C_k=G'\setminus \{e\}$. In the first case $A_k=I_n$, so $a_{t,i}d_{ii}=d_{ii}$. In the second case we have for all $x\neq y$, $yx^{-1}\in G'\setminus \{e\}$. So the $(x,y)$ entry of $A_k$ is nonzero for all $x\neq y$. Then $a_{t,i}=J_{|C_i|}-I_{|C_i|}$, and $a_{t,i}d_{ii}=J_{|C_i|}d_{ii}-d_{ii}$. Note $J_{|C_i|}d_{ii}=0$, so $a_{t,i}d_{ii}=-d_{ii}$ in this case. Thus, if $a_{t,i}$ is nonzero in some entry then either $a_{t,i}d_{ii}=d_{ii}$ or $a_{t,i}d_{ii}=-d_{ii}$. Note if $a_{t,i}=0$, then $a_{t,i}d_{ii}=0$.\\
    {\bf Subcase 2.2: } $C_i\neq C_t$. Consider the $(x,y)$ entry of $a_{t,i}$. We have $x\in C_t$ and $y\in C_i$. As $C_t=G'\setminus \{e\}$, $x\in G'$. Since $C_i\neq C_t$, $C_i=zG'$ for some $z\in G\setminus G'$. Then $yx^{-1}\in zG'$. Thus, the $(x,y)$ entry of $a_{t,i}$ is nonzero if and only if $C_k=zG'=C_i$. In this case $yx^{-1}\in zG'$ for all $x\in C_t=G'\setminus \{e\}$ and $y\in zG'$. So every entry of $a_{t,i}$ is $1$. Then $a_{t,i}=J_{|C_i|}$. We then have $a_{t,i}d_{ii}=J_{|C_i|}d_{ii}$. Note $J_{|C_i|}d_{ii}=0$, so $a_{t,i}d_{ii}=0$ in this case. If $C_k\neq C_i$, then $yx^{-1}\not\in C_k$ for any $x\in C_t$ and $y\in C_i$. Then the $(x,y)$ entry of $a_{t,i}$ is $0$ for all $x,y$. So $a_{t,i}$ is the all $0$ matrix. In this case $a_{t,i}d_{ii}=0$. Hence, in either of these cases $a_{t,i}d_{ii}=0$.

    {\bf Case 3: }Consider the block, $a_{t,i}$ of $A_k$ when $C_t=zG'$ for some $z\in G\setminus G'$. We consider subcases based on if $C_i=C_t$ or not.\\
    {\bf Subcase 3.1: } $C_t=C_i$.  Consider the $(x,y)$ entry of $a_{t,i}$. We have $x\in C_t=zG'$ and $y\in C_i=zG'$. Then $yx^{-1}\in (zG')^{-1}zG'=G'$. Then as in Subcase 2.1, if $a_{t,i}$ is nonzero in some entry then either $a_{t,i}d_{ii}=d_{ii}$ or $a_{t,i}d_{ii}=-d_{ii}$. If $a_{t,i}=0$, then $a_{t,i}d_{ii}=0$.\\
    {\bf Subcase 3.2: }Suppose $C_i\neq C_t$. Consider the $(x,y)$ entry of $a_{t,i}$. We have $x\in C_t$ and $y\in C_i$. If $C_i=G'\setminus \{e\}$, then $yx^{-1}\in z^{-1}G'$. If $C_i\neq G'\setminus \{e\}$, then $C_i=uG'$ for some $u\in G\setminus G'$. Then $yx^{-1}\in uz^{-1}G'$. Either way $yx^{-1}\in wG'$ for some $w\in G\setminus G'$. Thus, the $(x,y)$ entry of $a_{t,i}$ is nonzero if and only if $C_k=wG'$. In this case $yx^{-1}\in wG'$ for all $x\in C_t=zG'$ and $y\in C_i$. So every entry of $a_{t,i}$ is $1$. Then $a_{t,i}=J_{|C_i|}$, and $a_{t,i}d_{ii}=J_{|C_i|}d_{ii}=0$. If $C_k\neq wG'$, then $yx^{-1}\not\in C_k$ for any $x\in C_t$ and $y\in C_i$. Then the $(x,y)$ entry of $a_{t,i}$ is $0$ for all $x,y$. So $a_{t,i}$ is the all $0$ matrix. In this case $a_{t,i}d_{ii}=0$. Hence, in both cases $a_{t,i}d_{ii}=0$.

    From these three cases we have $a_{t,i}d_{ii}=0$ if $t\neq i$. For $t=i$, if $C_k=C_0$, then $a_{i,i}d_{ii}=d_{ii}$ and if $C_k=G'\setminus \{e\}$ then $a_{i,i}d_{ii}=-d_{ii}$. If $C_k\not\subseteq G'$, then $a_{i,i}d_{ii}=0$.
    Therefore
    \[A_kB_i=\left\{\begin{array}{cc}
        B_i & \text{ if }C_k=C_0 \\
        -B_i & \text{ if }C_k=G'\setminus \{e\} \\
        0 & \text{ otherwise }
    \end{array}\right..\]
    So $A_kB_i\in W_i$ for all $k$. Hence, $W_i$ is closed under multiplication on the left by the $A_k$ matrices. Hence, $W_i$ is a left ideal of $T(G)$. We note that $B_i$ is a symmetric matrix for all $i$. Then every matrix in $W_i$ is symmetric, which implies $W_i$ is a right idea as well. Thus, $W_i$ is two-sided ideal of $T(G)$ for all $1\leq i\leq p^n-1$. 
\end{proof}

\begin{Cor}\label{cor:frobid}
    Let $G=Z_p^{n}\rtimes Z_{p^n-1}$ for some prime $p$ and $n\geq 1$. Suppose $C_0=\{e\},C_1,\cdots, C_{p^n-1}$ are the classes of $G$. Defining $B_i$ as in Proposition \ref{Prop:frob1dim}, we have that for each $1\leq i\leq p^n-1$, $\frac{|C_i|-1}{|C_i|}B_i$ is a non-primary primitive idempotent of $T(G)$. 
\end{Cor}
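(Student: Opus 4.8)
The plan is to show that $e_i:=\frac{|C_i|-1}{|C_i|}B_i$ is an idempotent of $T(G)$, and then to identify $W_i=\Span(B_i)$ with a one‑dimensional Wedderburn component of $T(G)$ distinct from the primary component $V$. The point is that once we know $W_i$ is a one‑dimensional two‑sided ideal of the semisimple algebra $T(G)$, it must be a full Wedderburn component $\cong\C$, and its identity element is its unique nonzero idempotent; producing that idempotent explicitly as a scalar multiple of $B_i$ is then a short computation.

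First I would collect what is already available. By Proposition \ref{Prop:frob1dim}, $B_i\in T(G)$ and $W_i$ is a two‑sided ideal of $T(G)$; since the $C_i,C_i$ block of $B_i$ has $1$'s on its diagonal we have $B_i\neq 0$, so $\dim W_i=1$. As $|G|>1$, $T(G)$ is semisimple, so the nonzero two‑sided ideal $W_i$ is a direct sum of Wedderburn components; being one‑dimensional it is a single component, necessarily isomorphic to the field $\C$.

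The one computation needed is $B_i^2=\frac{|C_i|}{|C_i|-1}B_i$. Since $B_i$ vanishes outside the $C_i,C_i$ block, so does $B_i^2$, and its $C_i,C_i$ block is $d_{ii}^2$. Writing $n=|C_i|$ and $d_{ii}=\frac{1}{n-1}(nI_n-J_n)$, the identities $J_n^2=nJ_n$ and $I_nJ_n=J_nI_n=J_n$ give $(nI_n-J_n)^2=n^2I_n-nJ_n=n(nI_n-J_n)$, hence $d_{ii}^2=\frac{n}{n-1}d_{ii}$ and $B_i^2=\frac{n}{n-1}B_i$. Consequently $e_i=\frac{n-1}{n}B_i$ satisfies $e_i^2=\frac{(n-1)^2}{n^2}\cdot\frac{n}{n-1}B_i=e_i$, and $e_i\in T(G)$ because $T(G)$ is a $\C$‑algebra containing $B_i$.

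Finally, $e_i$ lies in $W_i$, so $e_iT(G)e_i\subseteq W_i=\Span(e_i)$ while $e_i=e_i\cdot e_i\cdot e_i\in e_iT(G)e_i$; thus $e_iT(G)e_i=\Span(e_i)\cong\C$ is a division algebra, so $e_i$ is a primitive idempotent of $T(G)$ (equivalently, if $e_i=f+g$ with $f,g$ nonzero orthogonal idempotents then $f,g\in\Span(e_i)$ forces one of them to be $0$). It is non‑primary because $\dim W_i=1$ whereas the primary component $V$ has dimension $(p^n)^2>1$, since $G$ has $p^n$ conjugacy classes by the proof of Theorem \ref{cor:frobdim}; equivalently, the primitive idempotent of $V$ is block diagonal with each $C_j,C_j$ block having all entries $\frac{1}{|C_j|}$, which is not a scalar multiple of $B_i$. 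The only genuine work here is the square $d_{ii}^2$, which is elementary linear algebra on $I_n$ and $J_n$; everything else is semisimplicity of $T(G)$ together with Proposition \ref{Prop:frob1dim}.
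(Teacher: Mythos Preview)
Your proof is correct and follows the same approach as the paper: the paper's proof simply asserts that direct computation gives $B_i^2=\bigl(1+\tfrac{1}{|C_i|-1}\bigr)B_i$ and concludes, while you carry out that computation explicitly via $d_{ii}=\tfrac{1}{n-1}(nI_n-J_n)$ and supply the additional justification that $e_i$ is primitive and non-primary using Proposition~\ref{Prop:frob1dim} and semisimplicity. Your version is more detailed but not substantively different.
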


\begin{proof}
    We note that direct computation yields for all $1\leq i\leq p^n-1$, $B_i^2=(1+\frac{1}{|C_i|-1})B_i$, giving for each $1\leq i\leq p^n-1$, $\frac{|C_i|-1}{|C_i|}B_i$ is a non-primary primitive idempotent of $T(G)$.
\end{proof}

\begin{Cor}\label{cor:frobdecomp}
     Let $G=Z_p^{n}\rtimes Z_{p^n-1}$ for some prime $p$ and $n\geq 1$. Suppose $C_0=\{e\},C_1,\cdots, C_{p^n-1}$ are the conjugacy classes of $G$. Then the Wedderburn decomposition of $T(G)$ is
     \[T(G)\cong V\oplus W_1\oplus W_2 \oplus \cdots \oplus W_{p^n-1}.\]
\end{Cor}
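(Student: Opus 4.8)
The plan is to combine the dimension formula of Theorem~\ref{cor:frobdim} with the explicit two-sided ideals produced in Proposition~\ref{Prop:frob1dim}, and then to finish by a dimension count inside the semisimple algebra $T(G)$. First recall that $T(G)$ is semisimple, so it decomposes as the direct sum of its minimal two-sided ideals (its Wedderburn components), and every two-sided ideal of $T(G)$ is a sum of some of these. The group $G=\Z_p^{n}\rtimes\Z_{p^n-1}$ has $p^n$ conjugacy classes (as computed in the proof of Theorem~\ref{cor:frobdim}), so the primary component $V$ is a Wedderburn component of dimension $(p^n)^2=p^{2n}$. By Proposition~\ref{Prop:frob1dim}, for each $1\le i\le p^n-1$ the space $W_i=\Span(B_i)$ is a nonzero two-sided ideal of $T(G)$; it is $1$-dimensional, and since $B_i$ is a scalar multiple of an idempotent by Corollary~\ref{cor:frobid}, $W_i$ is a simple (minimal) two-sided ideal, hence a Wedderburn component.

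Next I would check that $V,W_1,\dots,W_{p^n-1}$ are pairwise distinct Wedderburn components. The matrices $B_i$ are supported on the diagonal blocks $C_i\times C_i$, and these supports are pairwise disjoint, so $B_iB_j=0$ for $i\neq j$ and hence $W_i\neq W_j$. Moreover no $W_i$ equals $V$, since $\dim W_i=1<p^{2n}=\dim V$ (here $p^n\ge 2$). If one prefers to see the orthogonality explicitly: the central idempotent of $V$ is block diagonal with $C_i\times C_i$ block $\frac{1}{|C_i|}J_{|C_i|}$, and $J_{|C_i|}d_{ii}=0$ because the column sums of $d_{ii}$ vanish, so $e_VB_i=0$ and $V$ is orthogonal to every $W_i$.

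Since distinct minimal two-sided ideals of a semisimple algebra are independent and their sum is direct, $V\oplus W_1\oplus\cdots\oplus W_{p^n-1}$ is an internal direct sum of ideals inside $T(G)$. Its dimension is $p^{2n}+(p^n-1)=p^{2n}+p^n-1$, which by Theorem~\ref{cor:frobdim} equals $\dim T(G)$. Hence the inclusion is an equality, $T(G)\cong V\oplus W_1\oplus\cdots\oplus W_{p^n-1}$, and these are all of the Wedderburn components (so in particular $\dim Z(T(G))=p^n$).

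The only content beyond Proposition~\ref{Prop:frob1dim} and Theorem~\ref{cor:frobdim} is the bookkeeping of disjoint supports and the dimension match, so I do not expect a substantial obstacle here; the real work was already carried out in establishing that each $W_i$ is a two-sided ideal and in computing $\dim T(G)$. If the explicit orthogonality is included, the only step requiring care is the block computation $e_VB_i=0$ and $B_iB_j=0$ for $i\ne j$, both of which follow immediately from $J_{|C_i|}d_{ii}=0$ and the disjointness of the diagonal block supports.
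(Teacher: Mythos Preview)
Your proposal is correct and follows essentially the same approach as the paper: establish that $V$ and the $W_i$ are pairwise orthogonal two-sided ideals (via disjoint block supports and $J_{|C_i|}d_{ii}=0$), then match the total dimension against Theorem~\ref{cor:frobdim}. The only cosmetic difference is that you phrase orthogonality via idempotent products $e_VB_i=0$ and $B_iB_j=0$, whereas the paper phrases it via vanishing row sums and the Hermitian inner product; the content is identical.
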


\begin{proof}
   Recall $V$ is an irreducible two sided ideal of $T(G)$ with dimension equal to the square of number of conjugacy classes of $G$. From the proof of Theorem \ref{cor:frobdim}, $G$ has $p^n$ classes. Then $V$ has dimension $p^{2n}$. From Proposition \ref{Prop:frob1dim}, $W_i$ is a $1-$dimensional ideal of $T(G)$ for all $1\leq i\leq p^n-1$. We note that since the row sum of each row of the basis element of $W_i$ is $0$, that each $W_i$ is orthogonal to $V$ under the Hermitian inner product. As the basis elements for each $W_i$ are nonzero in different blocks, $W_i$ and $W_j$ are orthogonal for all $i\neq j$.
    Then $\dim(V\oplus W_1\oplus W_2\oplus \cdots \oplus W_{p^n-1})=p^{2n}+p^{n}-1$.
    As each of the $W_i$ and $V$ are ideals of $T(G)$ we have $V\oplus W_1\oplus W_2\oplus \cdots \oplus W_{p^n-1}\subseteq T(G)$.
    From Theorem \ref{cor:frobdim}, $\dim(T(G))=p^{2n}+p^n-1$. Then $V\oplus W_1\oplus W_2\oplus \cdots \oplus W_{p^n-1}$ is a subalgebra of $T(G)$ with the same dimension. We thus have $T(G)\cong V\oplus W_1\oplus W_2 \oplus \cdots \oplus W_{p^n-1}$.
\end{proof}

\bibliographystyle{plain}
\bibliography{Sources}

\end{document}